\newcounter{version}
\newcounter{short}
\newcounter{long}
\numberwithin{equation}{section}
\theoremstyle{definition}
\newtheorem{dfn}{Definition}[section]
\newtheorem{example}[dfn]{Example}
\newtheorem{rem}[dfn]{Remark}
\theoremstyle{plain}
\newtheorem{thm}[dfn]{Theorem}
\newtheorem{prp}[dfn]{Proposition}
\newtheorem{cor}[dfn]{Corollary}
\newtheorem{lem}[dfn]{Lemma}
\newcommand\hyph{-\penalty0\hskip0pt\relax}
\newcommand\C{{\mathbb C}}
\newcommand\Z{{\mathbb Z}}
\newcommand\N{{\mathbb N}}
\newcommand\T{{\mathbb T}}
\newcommand\LL{{\mathcal L}}
\newcommand\J{{\mathcal J}}
\newcommand\hM{\widehat{M}}
\newcommand\tX{\widetilde{X}}
\newcommand\Ind{{\rm Ind}}
\newcommand\Ann{{\rm Ann}}
\newcommand\Hom{{\rm Hom}}
\newcommand\Span{{\rm Span\,}}
\newcommand\Der{{\rm Der\,}}
\newcommand\gl{gl}
\newcommand\darrow{\longrightarrow {\mkern -27mu} {\raise 6pt \hbox{\it d}} {\mkern 16mu}}
\newcommand\GL{GL_n(\Z)}
\date{}
\begin{document}

\

\title
[Classification of simple $W_n$-modules]
{Classification of simple $W_n$-modules with finite-dimensional weight spaces}
\author{Yuly Billig}
\address{School of Mathematics and Statistics, Carleton University, Ottawa, Canada}
\email{billig@math.carleton.ca}
\author{Vyacheslav Futorny}
\address{ Instituto de Matem\'atica e Estat\'\i stica,
Universidade de S\~ao Paulo,  S\~ao Paulo,
 Brasil}
 \email{futorny@ime.usp.br}
\let\thefootnote\relax\footnotetext{{\it 2010 Mathematics Subject Classification.}
Primary 17B10, 17B66; Secondary 17B68.}

\begin{abstract}
We classify all simple $W_n$-modules with finite-dimensional weight spaces. 
Every such module is either of a highest weight type
or is a quotient of a module of tensor fields on a torus, which was conjectured by Eswara Rao.
This generalizes the classical result of Mathieu on simple weight modules for
the Virasoro algebra. In our proof of the classification
we construct a functor from the category of cuspidal $W_n$-modules to the category
of $W_n$-modules with a compatible action of the algebra of functions on a torus.
We also present a new identity for certain quadratic elements in the universal enveloping algebra
of $W_1$, which provides important information about cuspidal $W_1$-modules.
\end{abstract}

\maketitle

\section{Introduction.}

In 1992 Mathieu \cite{Mat} classified simple modules with finite-dimensional weight spaces for the Virasoro algebra,
proving a conjecture of Kac \cite{Kac}. This was preceded by the work of Kaplansky \cite{Kap} and
Kaplansky-Santharoubane \cite{KS} on classification of simple modules for the Virasoro algebra with $1$-dimensional
weight spaces.

Mathieu proved that simple weight modules fall into two classes: (1) highest/ lowest weight modules and (2) modules
of tensor fields on a circle and their quotients. It was natural to ask the same question about the Lie algebra $W_n$
of vector fields on a torus. In case of $W_n$, however, it was not clear at the time what should be the statement of the
conjecture.

The $W_n$-modules of tensor fields were well-known, in 1970s Gelfand's school studied various cohomological
problems related to these modules \cite{Fu}. However it was not known whether there exist analogues of 
the highest weight modules for $W_n$ with finite-dimensional weight spaces.
It turned out that such modules do exist, and the highest weight type modules for $W_n$ were constructed 
using a theorem of Berman-Billig \cite{BB}, \cite{BZ}. Recently vertex operator realizations of simple
$W_n$-modules of the highest weight type were given in \cite{BF}.

 In 2004 Eswara Rao conjectured that simple modules for $W_n$ with finite{\hyph}dimensional weight spaces also fall into
two classes: (1) modules of the highest weight type and (2) modules of tensor fields on a torus and their quotients.


 Solenoidal Lie algebras (also known as centerless higher rank Virasoro algebras) serve as a bridge
between $W_1$ and $W_n$. A solenoidal Lie algebra is the algebra of vector fields which are collinear at each point 
of the torus to some fixed generic vector. The name is motivated by the fact that the flow lines of such fields are
dense windings on the torus. Solenoidal Lie algebras are very similar to $W_1$ in many regards, yet they have 
the same grading as $W_n$.

Su \cite{Su} classified simple cuspidal modules for the solenoidal Lie algebras (i.e., modules with a uniform bound on dimensions
of weight spaces). Lu-Zhao \cite{LZ} classified all simple modules with finite-dimensional weight 
spaces for this class of algebras.

Using this work, Mazorchuk-Zhao \cite{MZ} described supports (sets of weights) of all simple $W_n$-modules with 
finite-dimensional weight spaces.

In the present paper we complete the classification of such modules, proving Eswara Rao's conjecture.

In order to achieve the result, we need to understand the structure of cuspidal (not necessarily simple) modules for $W_1$,
the solenoidal algebras and $W_n$. For an important subcategory of cuspidal $AW_n$-modules that 
consists of modules admitting the action of the commutative algebra $A$ of functions, this structure is
understood, such $AW_n$-modules were analyzed in \cite{Rao2} and \cite{Jet}. 
Our present approach reduces the classification of simple cuspidal
$W_n$-modules to the classification of simple cuspidal $AW_n$-modules. To achieve this we construct
the functor of coinduction, which associates to a $W_n$-module $M$ a coinduced $AW_n$-module
$\Hom (A, M)$. The coinduced module itself is too big and takes us out of the category of 
modules with finite-dimensional weight spaces. However, we can exploit the fact that $W_n$ is itself
an $A$-module and for each $x \in W_n$, $u\in M$ define $\psi(x,u) \in \Hom (A, M)$ by
$$\psi(x,u) (f) = (fx) u, \quad \text{ for } f\in A.$$
A subspace in the coinduced module spanned by all  $\psi(x,u)$ is an $AW_n$-submodule, which
we call the $A$-cover $\hM$ of $M$. The $A$-cover construction plays a crucial role in our proof.

One of our key results is a theorem that the $A$-cover of a cuspidal module still has finite-dimensional weight
spaces. As we have a homomorphism $\hM \rightarrow M$, $\psi(x, u) \mapsto x u$, which is
surjective under a mild assumption $W_n M = M$, we are able to reduce the classification
of simple cuspidal $W_n$-modules to the classification of simple cuspidal $AW_n$-modules, obtained in
\cite{Rao2}. 

 The proof of the theorem that the $A$-cover of a cuspidal module is again cuspidal, is based on an
interesting new identity for the following quadratic elements in the universal enveloping algebra of $W_1$:
$$\Omega_{k,s}^{(m)} = \sum_{a=0}^m (-1)^a {m \choose a} e_{k-a} e_{s+a} .$$
We prove, in particular, that for $m \geq 2$
\begin{align}
\sum_{i=0}^m \sum_{j=0}^m (-1)^{i+j} & {m \choose i} {m \choose j} 
\left( \left\{ \Omega^{(m)}_{k-i,s-j} , \Omega^{(m)}_{q+i,p+j} \right\}
-       \left\{ \Omega^{(m)}_{k-i,q-j} , \Omega^{(m)}_{s+i,p+j} \right\} \right)
\notag \\
& = (q-s) (p-k+2m) \Omega^{(4m)}_{k+p+2m,s+q-2m},
\label{alg}
\end{align}
where by $\{ X, Y \}$ we denote the anticommutator $XY + YX$.

Using this algebraic identity we show that operators $\Omega_{k,s}^{(m)}$ annihilate any cuspidal
$W_1$-module $M$ for large enough $m$, which depends only on the length of the composition 
series of $M$.

Let us present the classification of simple $W_n$-modules with finite-dimensional weight spaces that we 
establish in this paper. 

 Modules of tensor fields on a torus $T(U, \beta)$ are parametrized by their support $\beta+\Z^n$ and 
a finite-dimensional $\gl_n$-module $U$. The latter is explained by the fact that the local coordinate
changes induce the action of $\gl_n$ on a fiber of a tensor bundle. If a $\gl_n$-module $U$ is simple
the resulting module $T(U,\beta)$ is a simple $W_n$-module, except when it is a module $\Omega^k (\beta)$ 
of differential $k$-forms. The exceptional nature of the modules of differential forms comes from the fact
that they form the de Rham complex, and the differential $d$ is a homomorphism of $W_n$-modules.

Modules of the highest weight type are constructed using the technique of the generalized Verma modules.
We choose a $\Z$-grading on $W_n$, say by degree in $t_n$. Then a degree zero subalgebra is a semidirect
product of $W_{n-1}$ with an abelian ideal. We take a tensor module  for $W_{n-1}$ and let the abelian
ideal act on it via $A$-action rescaled with a complex parameter $\gamma$, yielding a module $T(U, \beta, \gamma)$
for the degree zero subalgebra in $W_n$. We postulate that the positive degree subalgebra acts trivially on
$T(U,\beta, \gamma)$ and construct the induced $W_n$-module $M(U,\beta,\gamma)$. By Theorem of
Berman-Billig \cite{BB}, the simple quotient $L(U,\beta,\gamma)$ has finite-dimensional weight spaces.
Finally, we can twist such modules by a $\GL$-automorphism of $W_n$, which corresponds to a change
in the $\Z$-grading on $W_n$.

The trivial $1$-dimensional $W_n$-module can be constructed as either a quotient of a module of differential forms
or as a module of the highest weight type.

Our main result is the following theorem.
\begin{thm} (cf., Theorem \ref{main})
Every  simple weight $W_n$-module with finite-dimensional weight spaces is isomorphic to one of the
following:

$\bullet$ a module of tensor fields $T(U, \beta)$, where $\beta\in\C^n$ and $U$ is a simple finite-dimensional
$\gl_n$-module, different from an exterior power of the natural $n${\hyph}dimensional $\gl_n$-module,

$\bullet$ a submodule $d \Omega^k (\beta) \subset \Omega^{k+1} (\beta)$ for $0 \leq k < n$ and  $\beta\in\C^n$,

$\bullet$ a module of the highest weight type $L(U,\beta,\gamma)^g$, twisted by $g\in\GL$, where $U$ is a simple
finite-dimensional $\gl_{n-1}$-module, $\beta\in\C^{n-1}$ and $\gamma\in\C$. 
\end{thm}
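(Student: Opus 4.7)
The plan is to follow the dichotomy suggested by the introduction: every simple weight $W_n$-module $M$ with finite-dimensional weight spaces is either of the highest weight type (with respect to some $\Z$-grading) or cuspidal (uniformly bounded weight multiplicities), and I would handle these two cases separately. The first step is to invoke the description of supports from Mazorchuk--Zhao \cite{MZ}: either the support of $M$ sits in a proper affine half-space relative to some $\Z$-grading of $W_n$ coming from an element $g \in \GL$, in which case $M$ is of the highest weight type, or the support fills $\beta + \Z^n$ and the weight multiplicities are uniformly bounded.

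For the highest weight branch, after twisting by the appropriate $g \in \GL$ I would identify $M$ as a quotient of a generalized Verma module built from a simple $\gl_{n-1}$-module $U$ and parameters $(\beta,\gamma)$; the Berman--Billig theorem \cite{BB} guarantees that the simple quotient $L(U,\beta,\gamma)$ has finite-dimensional weight spaces, which matches the third bullet of the theorem. Uniqueness of $(U,\beta,\gamma)$ up to the obvious equivalences follows from recovering these data from the top space of $M$.

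The cuspidal branch is where the new technique is crucial and where I expect the main obstacle. Given a cuspidal $M$, I would form the $A$-cover $\hM \subseteq \Hom(A,M)$ spanned by the elements $\psi(x,u)(f) = (fx)u$; this is an $AW_n$-submodule, and the evaluation map $\hM \to M$, $\psi(x,u) \mapsto xu$, is a surjective $W_n$-homomorphism because $W_n M = M$ for any simple $M$ admitting a nonzero action. The critical and most delicate step is to show that $\hM$ is still cuspidal: the weight space $\hM_\lambda$ a priori sits inside $\Hom(A,M)_\lambda$, which is infinite-dimensional. This is exactly where the identity \eqref{alg} is used, via solenoidal subalgebras to reduce to $W_1$: for $m$ larger than the composition length, the operators $\Omega_{k,s}^{(m)}$ annihilate $M$, and pushing this through the $A$-cover gives a bound on the $AW_n$-relations satisfied in $\hM$, forcing finite-dimensional weight spaces.

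Once $\hM$ is known to be cuspidal, I would apply the classification of simple cuspidal $AW_n$-modules by Eswara Rao \cite{Rao2} to each simple subquotient of $\hM$, concluding that $\hM$ admits a filtration whose factors are tensor field modules $T(U,\beta)$. Pushing the surjection $\hM \twoheadrightarrow M$ forward, $M$ is a simple subquotient of some $T(U,\beta)$. It remains to run through the standard $\gl_n$-theoretic analysis of when $T(U,\beta)$ is simple: for a simple $\gl_n$-module $U$ that is not an exterior power of the natural representation, $T(U,\beta)$ is already simple (first bullet); when $U = \Lambda^k(\C^n)$, the de Rham differential $d$ gives a nontrivial $W_n$-homomorphism, and the simple subquotients are exactly the submodules $d\Omega^k(\beta) \subset \Omega^{k+1}(\beta)$ together with the trivial module, which is also realized among the highest weight family (second and third bullets). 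The hard part is thus the cuspidality of $\hM$; everything downstream is essentially bookkeeping against the previously classified inputs.
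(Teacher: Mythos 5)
Your cuspidal branch is essentially the paper's own argument: form the $A$-cover $\hM$, prove it is again cuspidal via the differentiator identity and the decomposition of $W_n$ into solenoidal subalgebras (Theorems \ref{solcusp} and \ref{cusp}), take a composition series of $AW_n$-submodules, apply Eswara Rao's classification of simple cuspidal $AW_n$-modules (Theorem \ref{AWsimple}) to the factor that surjects onto $M$, and finish with the structure theory of the tensor modules and the de Rham complex (Theorem \ref{deRham}); the trivial module is correctly set aside. As a sketch of that half, this is fine and is the same route as Lemma \ref{cuspquot} in the paper.

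The genuine gap is in the highest weight branch. Mazorchuk--Zhao give you only that $M \cong L(X)^g$ for some $g\in\GL$ and some simple \emph{cuspidal module $X$ over the degree-zero subalgebra} $W_n^0 = W_{n-1} \ltimes \C[t_1^{\pm 1},\ldots,t_{n-1}^{\pm 1}]\,d_n$; they do not tell you that $X$ is (a simple quotient of) $T(U,\beta,\gamma)$. You simply assert that, after twisting, $M$ is a quotient of a generalized Verma module built from a simple $\gl_{n-1}$-module $U$ and parameters $(\beta,\gamma)$ -- i.e., you assume the structure of the top, and your uniqueness remark (``recovering these data from the top space'') presupposes exactly this identification. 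A priori the abelian ideal could act on $X$ in a complicated way, since it is itself a cuspidal $W_{n-1}$-module rather than a scalar. The paper closes this step by citing Guo--Liu--Zhao (\cite{GLZ}, Theorem 3.3): either $X \cong T(U,\beta,\gamma)$ with $\gamma \neq 0$, or the ideal $\C[t_1^{\pm 1},\ldots,t_{n-1}^{\pm 1}]\,d_n$ acts trivially on $X$; in the latter case $X$ is a simple cuspidal $W_{n-1}$-module, and one applies Lemma \ref{cuspquot} again (the $A$-cover machinery one rank down) to realize $X$ as a simple quotient of $T(U,\beta)$, so that $\tX \cong T(U,\beta,0)$ as a $W_n^0$-module and $M \cong L(U,\beta,0)^g$. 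Without this analysis, or some replacement for it, the exhaustiveness of the third bullet is not established; everything else in your outline matches the paper.
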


The structure of the paper is as follows. In Section 2 we discuss Lie algebras of vector fields on a torus,
their solenoidal subalgebras and give basic definitions in their representation theory. In Section 3 we prove our
key identity \eqref{alg} and link it with annihilators of cuspidal modules. In Section 4 we develop
the theory of coinduced modules and study the properties of $A$-covers of cuspidal modules.
Finally, in Section 5 we apply our results to establish the classification of simple $W_n$-modules with
finite-dimensional weight spaces.

\section{Lie algebras of vector fields and their cuspidal modules}

 Consider the Lie algebra $W_n$ of vector fields on an $n$-dimensional torus $\T^n$. The algebra of 
(complex-valued) Fourier polynomials on $\T^n$ is isomorphic to the algebra of Laurent polynomials
$$A = \C [t_1^{\pm 1}, \ldots, t_n^{\pm 1}] ,$$ 
and $W_n$ is the Lie algebra of derivations of $A$. Thus $W_n$ has a natural structure of an $A$-module,
which is free of rank $n$. We choose $d_1 = t_1 \frac{\partial}{\partial t_1}, \ldots, d_n = t_n \frac{\partial}{\partial t_n}$
as a basis of this $A$-module:
$$W_n = \mathop\oplus\limits_{p=1}^n A d_p .$$
Setting the notation $t^r = t_1^{r_1} \ldots t_n^{r_n}$ for
$r = (r_1,\ldots,r_n) \in \Z^n$, we can write the Lie bracket in $W_n$ as follows:
$$[t^r d_a, t^s d_b ] = s_a t^{r+s} d_b - r_b t^{r+s} d_a , \quad a,b = 1,\ldots, n; \ r, s \in \Z^n.$$
 The subspace spanned by $d_1, \ldots, d_n$ is the Cartan subalgebra in $W_n$ and the adjoint action of these
elements induces a $\Z^n$-grading.

In case when $n=1$ we get the Witt algebra $W_1$ with the basis $e_k = t_1^k d_1$, $k\in\Z$, and the bracket
\begin{equation}
[e_r, e_s ] = (s-r) e_{r+s} .
\label{witt}
\end{equation}

One particular family of subalgebras in $W_n$ will play a significant role in this paper. 

\begin{dfn}
We call a vector $\mu\in\C^n$ {\it generic} if  $\mu \cdot r \neq 0$ for all $r \in\Z^n\backslash \{ 0 \}$.
\end{dfn}
\begin{dfn}
Let $\mu$ be a generic vector in $\C^n$ 
and let $d_\mu = \mu_1 d_1 + \ldots + \mu_n d_n$. 
A {\it solenoidal} Lie algebra $W_\mu$ is the subalgebra in $W_n$ which consists of vector fields
collinear to $\mu$ at each point of $\T^n$, $W_\mu = A d_\mu$.
\end{dfn}
The Lie bracket in $W_\mu$ is 
$$ [t^r d_\mu, t^s d_\mu ] = \mu \cdot (s-r) t^{r+s} d_\mu .$$
Denote by $\Gamma_\mu$ the image of $\Z^n$ under the embedding $\Z^n \rightarrow \C$
given by $r \mapsto \mu \cdot r$. Then we can view the solenoidal Lie algebra $W_\mu$ as a
version of the Witt algebra $W_1$ where the indices of the basis elements $e_r$ run not over $\Z$,
but over the lattice $\Gamma_\mu \subset \C$. Here we make the identification $t^r d_\mu = e_{\mu \cdot r}$
and the formula \eqref{witt} remains valid.

The properties of the solenoidal Lie algebras are very similar to those of $W_1$, yet these algebras are close enough
to $W_n$. In particular, $W_n$ can be decomposed (as a vector space) in a direct sum of $n$ solenoidal subalgebras.
This elementary observation will be important for our proof.

 Let us now discuss modules for the Lie algebras of vector fields. A $W_n$-module $M$ is called a {\it weight} module
if $M = \mathop\oplus\limits_{\lambda\in\C^n} M_\lambda$, where the weight space $M_\lambda$ is defined as
$$M_\lambda = \left\{ v \in M \, | \, d_i v = \lambda_i v \hbox{ \ \rm for \ } i = 1, \ldots, n \right\} .$$

In particular, $W_n$ is a weight module over itself and its weight decomposition coincides with its $\Z^n$-grading.

Weight spaces of a $W_\mu$-module are defined as the eigenspaces of $d_\mu$.

Any weight $W_n$-module can be decomposed into a direct sum of submodules corresponding to distinct cosets of $\Z^n$
in $\C^n$. It is then sufficient to study modules supported on a single coset $\beta + \Z^n$, $\beta\in \C^n$.
We are going to impose such an assumption on $M$ for the rest of this paper.

\begin{dfn}
We call $M$ an $AW_n$-module if it is a module for both the Lie algebra $W_n$ and the commutative unital algebra $A$
of functions on a torus, with these two structures being compatible:
\begin{equation}
x (f v) = (x f) v + f (x v), \quad f \in A, x \in W_n, v \in M.
\label{compat}
\end{equation}
\end{dfn} 

For solenoidal Lie algebras the concept of an $AW_\mu$-module is defined in the same way.

If an $AW_n$-module $M$ is a weight module then \eqref{compat} implies that the action of $A$ is compatible 
with the weight grading of $M$: $A_\gamma M_\lambda \subset M_{\lambda+\gamma}$, $\gamma \in \Z^n$,
$\lambda \in \beta + \Z^n$. Suppose an $AW_n$-module $M$ has a weight decomposition with one of the
weight spaces being finite-dimensional. Since all non-zero homogeneous elements of $A$ are invertible, we 
conclude that all weight spaces of $M$ have the same dimension and that $M$ is a free $A$-module of a finite
rank.

\begin{dfn}
A weight module is called {\it cuspidal} if the dimensions of its weight spaces are uniformly bounded by some
constant.
\end{dfn}

Irreducible and indecomposable $AW_n$-modules with finite-dimensional weight spaces are classified in \cite{Rao2}
and \cite{Jet}. Our strategy will be to use this classification in order to describe simple cuspidal $W_n$-modules. 

For the Witt Lie algebra $W_1$ the classification of simple weight modules was obtained by Mathieu \cite{Mat}. Every such module is either 
a highest/lowest weight module,
a module of tensor fields, or a quotient of the module of functions on a circle. Apart from the trivial 1-dimensional module,
highest/lowest weight $W_1$-modules are not cuspidal (\cite{MarPia1}, Corollary III.3), 
while the modules of tensor fields are cuspidal and have weight spaces of dimension at most $1$.

 Let us define $W_1$-modules of tensor fields on a circle.
 For $\alpha, \beta \in \C$ let $T(\alpha, \beta)$ be a vector space with a basis $\{ v_s \, | \, s \in \beta + \Z \}$ and the following action
of $W_1$:
\begin{equation}
e_k v_s = (s + \alpha k) v_{s+k} .
\label{tenden}
\end{equation}

The modules $T(\alpha, \beta)$ are simple unless $\alpha = 0, 1$ and $\beta + \Z = \Z$. The exceptional cases $\alpha = 0, 1$ are
the modules of functions and 1-forms on a circle. The module of functions $T(0,0)$ has a 1-dimensional submodule (constant functions),
and the quotient by this 1-dimensional submodule is a simple $W_1$-module $\overline{T}(0,0)$ with a ``hole'' in a zero weight space. 
Applying the differential map from functions to 1-forms we get that $\overline{T}(0,0)$ is a submodule in $T(1,0)$, with the quotient of
$T(1,0)$ by $\overline{T}(0,0)$ being a 1-dimensional module.

 Simple cuspidal modules for the solenoidal Lie algebra $W_\mu$ have the same description as in the case of $W_1$ \cite{Su}, 
with the modification that the basis vectors $v_s$ are indexed by $s \in \beta+\Gamma_\mu$, and the action of $W_\mu$ 
is still given by the formula \eqref{tenden} with $k \in \Gamma_\mu$. The $W_\mu$-modules $T(\alpha, \beta)$ are 
simple unless $\alpha = 0,1$ and $\beta+\Gamma_\mu = \Gamma_\mu$.

\section{Annihilators of cuspidal modules}

In this section we are going to establish an identity in the universal enveloping algebra of $W_1$
which will be instrumental for the classification of simple cuspidal $W_n$-modules. Using this important identity 
we will be able to show that certain quadratic elements of the universal enveloping algebra 
belong to annihilators of cuspidal modules for $W_1$ and $W_\mu$. Let us mention that annihilators of simple
$W_1$-modules $T(\alpha, \beta)$ were described in \cite{ConMar}. Our results also shed light 
on the structure of extensions of cuspidal $W_1$-modules. Such extensions of
length 2 and 3 were studied in \cite{MarPia2} and \cite{Con}.  

 Let us inductively define {\it differentiators} $\Omega^{(m)}_{k,s} \in U(W_1)$. Set
$$\Omega^{(0)}_{k,s} = e_k e_s$$
and let
$$\Omega^{(m+1)}_{k,s} = \Omega^{(m)}_{k,s} - \Omega^{(m)}_{k-1,s+1} .$$
The element $\Omega^{(m)}_{k,s}$ can be viewed as the $m$-th difference derivative of
$\Omega^{(0)}_{k,s}$. We can express the differentiators in a closed form:
$$\Omega^{(m)}_{k,s} = \sum_{i=0}^m (-1)^i {m \choose i} e_{k-i} e_{s+i} .$$
\begin{example}
$\Omega^{(2)}_{1,-1} = e_1 e_{-1} - 2 e_0^2 + e_{-1} e_1$ is the Casimir element of
$sl_2 = \left< e_{-1}, e_0, e_1 \right>$.
\end{example}
The following Lemma can be easily established by a direct computation:
\begin{lem}
For all $k,s \in \Z$, the differentiators $\Omega^{(3)}_{k,s}$ belong to the annihilators
of all $W_1$-modules of tensor fields $T(\alpha, \beta)$.
\label{basic}
\end{lem}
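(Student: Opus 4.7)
The plan is a direct computation using the explicit action of $W_1$ on the tensor field module $T(\alpha,\beta)$. Fix a basis vector $v_p$ with $p \in \beta+\Z$. Since $e_{k-i}e_{s+i}v_p$ lies in the weight space indexed by $p+k+s$ for every $i$ (the total shift $(k-i)+(s+i)=k+s$ is independent of $i$), the element $\Omega^{(3)}_{k,s} v_p$ will automatically be a scalar multiple of $v_{p+k+s}$. So the lemma reduces to showing that a single scalar vanishes.

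Next I would evaluate that scalar. Applying \eqref{tenden} twice gives
\[
e_{k-i} e_{s+i} v_p = \bigl(p + \alpha(s+i)\bigr)\bigl(p+s+i + \alpha(k-i)\bigr)\, v_{p+k+s}.
\]
Writing $u = p+s+\alpha k$ and $v = p+\alpha s$, the coefficient becomes
\[
a_i \;=\; \bigl(v+i\alpha\bigr)\bigl(u+i(1-\alpha)\bigr) \;=\; uv \,+\, i\bigl(\alpha u+(1-\alpha)v\bigr) \,+\, i^{2}\alpha(1-\alpha),
\]
which is a polynomial in $i$ of degree at most $2$, with coefficients depending on $k,s,p,\alpha$ but not on $i$.

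Now I invoke the standard finite-difference identity: for any polynomial $P(i)$ of degree strictly less than $m$,
\[
\sum_{i=0}^m (-1)^{i}\binom{m}{i} P(i) \;=\; 0,
\]
since the $m$-th forward difference operator annihilates polynomials of degree $<m$. Applying this with $m=3$ and $P(i)=a_i$ (degree $\leq 2 < 3$) yields
\[
\Omega^{(3)}_{k,s} v_p \;=\; \left(\sum_{i=0}^{3}(-1)^i\binom{3}{i} a_i\right) v_{p+k+s} \;=\; 0.
\]
Since this holds for every basis vector $v_p$, the element $\Omega^{(3)}_{k,s}$ annihilates $T(\alpha,\beta)$, proving the lemma.

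There is no real obstacle here; the only thing to check carefully is the degree count (that $a_i$ has no $i^3$ term, which follows because the two linear factors in $i$ multiply to a quadratic), after which the finite-difference cancellation is automatic. In fact the same argument shows more generally that $\Omega^{(m)}_{k,s}$ annihilates $T(\alpha,\beta)$ for every $m\geq 3$, explaining why $m=3$ is the threshold appearing in the statement.
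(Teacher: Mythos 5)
Your proof is correct, and it is exactly the direct computation the paper has in mind (the paper states the lemma "can be easily established by a direct computation" and omits the details): the coefficient of $v_{p+k+s}$ is a polynomial of degree at most $2$ in $i$, so the third finite difference vanishes. Your closing observation that the same argument works for all $m\ge 3$ is also right.
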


More generally, if $M$ is a module with a polynomial action of $W_1$ (see \cite{BB} for the exact
definition), then $\Omega^{(m)}_{k,s} \in \Ann(M)$ for large enough $m$.

Now let us state our key identity. We denote by $\left\{ X, Y \right\}$ the anticommutator
$XY + YX$.

\begin{thm} Let $m, r \geq 2$. Then
\begin{align*}
\sum_{i=0}^m \sum_{j=0}^r (-1)^{i+j} {m \choose i} {r \choose j}
\left( \left\{ \Omega^{(m)}_{k-i,s-j} , \Omega^{(r)}_{q+i,p+j} \right\}
-       \left\{ \Omega^{(m)}_{k-i,q-j} , \Omega^{(r)}_{s+i,p+j} \right\} \right) 
\\
= (q-s) \left( (p-k+2r) \Omega^{(2m+2r-1)}_{k+p+2r,s+q-2r}
-  (p-k+2m) \Omega^{(2m+2r-1)}_{k+p+2r-1,s+q-2r+1} \right) .
\end{align*}
\label{magic}
\end{thm}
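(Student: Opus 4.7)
The plan is to prove the identity by rewriting both sides in terms of generating functions. Introduce the formal distribution
$$L(z) = \sum_{k \in \Z} e_k z^{-k-1}.$$
A short reindexing of the sum defining $\Omega^{(m)}_{k,s}$ shows that
$$\sum_{k,s \in \Z} \Omega^{(m)}_{k,s}\, x^{-k-1} y^{-s-1} \;=\; \left(1 - \frac{y}{x}\right)^{\!m} L(x) L(y).$$
Thus each differentiator arises as a product of two $L$-fields with a prefactor vanishing to order $m$ on the diagonal $y = x$. In particular the defining recursion $\Omega^{(m+1)}_{k,s} = \Omega^{(m)}_{k,s} - \Omega^{(m)}_{k-1,s+1}$ is simply multiplication by $1 - y/x$.

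Next I multiply the left-hand side of the theorem by $x^{-k-1}y^{-s-1}z^{-q-1}w^{-p-1}$ and sum over all $k,s,q,p \in \Z$. The inner binomial sums inside the $\Omega$'s combine with the outer binomial sums over $i, j$ to give the same rational prefactor on both anticommutator terms:
$$P(x,y,z,w) \;=\; \left(1-\frac{y}{x}\right)^{\!m}\left(1-\frac{z}{x}\right)^{\!m}\left(1-\frac{w}{y}\right)^{\!r}\left(1-\frac{w}{z}\right)^{\!r}.$$
The LHS is thereby identified with the coefficient of $x^{-k-1}y^{-s-1}z^{-q-1}w^{-p-1}$ in
$$P(x,y,z,w)\bigl(\{L(x)L(y),L(z)L(w)\}-\{L(x)L(z),L(y)L(w)\}\bigr).$$
Note that $P$ is symmetric under $y \leftrightarrow z$ while the anticommutator difference is antisymmetric; this accounts at the combinatorial level for the $(q-s)$ factor on the right-hand side.

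The core of the argument is to reduce the anticommutator difference. Writing $\{AB,CD\}-\{AC,BD\} = A[B,C]D + CDAB - BDAC$ and iterating $XY-YX = [X,Y]$, the difference collapses to a sum of words in the $L$-fields carrying one or more Witt commutators $[L(u),L(v)]$. Each such commutator, as a formal distribution obtained from $[e_r,e_s]=(s-r)e_{r+s}$, is supported on the diagonal $u=v$: it decomposes into a $\delta(u/v)$-piece with coefficient $\partial L(v)$ and a $\partial_v\delta(u/v)$-piece with coefficient $L(v)$. Since $P$ vanishes to order $m$ on $y=x$ and on $z=x$, and to order $r$ on $w=y$ and on $w=z$, almost all of these delta-function contributions are annihilated; only a bounded number of residues survive, arising from the $\partial_v\delta$ terms whose single integration by parts shifts a $\partial$ onto one of the prefactors of order $m$ or $r$. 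This is precisely the mechanism that produces the linear coefficients $(p-k+2m)$ and $(p-k+2r)$.

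The final step is to reassemble the surviving residues. Running the first-paragraph generating-function identity in reverse packages the resulting two-field expressions into $\Omega^{(2m+2r-1)}$ with the stated shifts $(k+p+2r, s+q-2r)$ and $(k+p+2r-1, s+q-2r+1)$. The main obstacle of the proof is precisely this bookkeeping stage: one must sort through the four vanishing diagonals of $P$ and, for each, match the orders of vanishing against the $\delta$ and $\partial\delta$ contributions that survive in each term of the expanded anticommutator difference, then verify that the net residue is exactly the stated linear combination of two $\Omega^{(2m+2r-1)}$'s and nothing else.
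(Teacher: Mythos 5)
Your generating-function setup is sound and is essentially a four-variable repackaging of the paper's own computation: the identity $\sum_{k,s}\Omega^{(m)}_{k,s}x^{-k-1}y^{-s-1}=(1-y/x)^m L(x)L(y)$ is correct, and a reindexing does show that both anticommutator strings on the left acquire the same prefactor $P(x,y,z,w)=(1-y/x)^m(1-z/x)^m(1-w/y)^r(1-w/z)^r$, so the left side is the coefficient of $x^{-k-1}y^{-s-1}z^{-q-1}w^{-p-1}$ in $P\cdot\bigl(\{L(x)L(y),L(z)L(w)\}-\{L(x)L(z),L(y)L(w)\}\bigr)$; the paper does the same manipulation at the level of modes and only brings in a one-variable generating function at the end. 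The hypothesis $m,r\geq 2$ enters exactly where you place it: since the Witt commutator $[L(u),L(v)]$ involves only $\delta(u-v)$ and $\partial_v\delta(u-v)$, a prefactor vanishing to order at least two on $u=v$ annihilates it, which is the distributional translation of the paper's use of $\sum_j(-1)^j\binom{r}{j}=\sum_j(-1)^j j\binom{r}{j}=0$.

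The gap is in the step you yourself call the main obstacle, and your description of the mechanism there is wrong. Because $m,r\geq 2$, nothing at all survives from the four diagonals $y=x$, $z=x$, $w=y$, $w=z$ on which $P$ vanishes: a single integration by parts moves one derivative onto a factor with a zero of order at least two, leaving a zero of order at least one against a plain $\delta$, which is still zero. So there are no residues ``arising from the $\partial_v\delta$ terms'' on those diagonals, contrary to your sketch and to your closing paragraph, which proposes to extract the answer by sorting through the four vanishing diagonals. The entire right-hand side comes instead from the two pairings your prefactor does not see, namely the commutators $[L(y),L(z)]$ and $[L(x),L(w)]$ supported on $y=z$ and $x=w$, where $P$ has no zero: in the paper's notation the surviving piece is the product of commutators $[e_{k-i-a},e_{p+j+b}]\,[e_{s-j+a},e_{q+i-b}]$ (coming from your $A[B,C]D$ term and from the commutators generated in reducing $CDAB-BDAC$), and the coefficients $p-k+2r$ and $p-k+2m$ arise from the $\partial\delta$ parts of these \emph{surviving} commutators differentiating $P$ restricted to $w=x$, $z=y$, not from integrations by parts against the vanishing factors. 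Since you neither identify these surviving terms nor carry out the residue computation that converts them into the stated combination of two $\Omega^{(2m+2r-1)}$'s, and the heuristic you offer for that step points at contributions that are identically zero, the proposal is incomplete precisely where the content of the theorem lies.
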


\begin{cor}
For every $\ell$ there exists $m$ such that for all $k,s$ the differentiator $\Omega^{(m)}_{k,s}$
annihilates every cuspidal $W_1$-module with a composition series of length $\ell$.
\label{ann}
\end{cor}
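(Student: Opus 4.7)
The plan is induction on $\ell$. The base case $\ell = 1$ covers simple cuspidal $W_1$-modules, which by Mathieu's classification are either tensor field modules $T(\alpha,\beta)$ or simple subquotients thereof (namely $\overline{T}(0,0)$ and the trivial module). Lemma \ref{basic} gives $\Omega^{(3)}_{k,s} \in \Ann(T(\alpha,\beta))$; since annihilation descends to subquotients, we may take $m_1 = 3$. Moreover, the recurrence $\Omega^{(m+1)}_{k,s} = \Omega^{(m)}_{k,s} - \Omega^{(m)}_{k-1,s+1}$ shows that if $\Omega^{(m_0)}_{k,s}$ kills a module for all $k,s$, then so does $\Omega^{(m)}_{k,s}$ for every $m \geq m_0$, so the $m_\ell$ produced by the induction will automatically dominate $m_1 = 3$.

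For the inductive step, suppose $m_\ell$ works and let $M$ be cuspidal with composition series of length $\ell+1$. Pick a maximal proper submodule $N \subset M$; then $N$ is cuspidal of length $\ell$ and $M/N$ is simple. By the induction hypothesis together with the observation above, $\Omega^{(m_\ell)}_{a,b}$ annihilates both $N$ and $M/N$ for all $a,b$. Consequently, for each $v \in M$, $\Omega^{(m_\ell)}_{a,b} v$ lies in $N$, so $\Omega^{(m_\ell)}_{c,d}\Omega^{(m_\ell)}_{a,b} v = 0$; symmetrically for the reversed product. Hence every anticommutator $\{\Omega^{(m_\ell)}_{a,b},\Omega^{(m_\ell)}_{c,d}\}$ acts as zero on $M$, and the entire left-hand side of Theorem \ref{magic}, specialized to $m = r = m_\ell$, vanishes on $M$. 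The right-hand side then forces
$$ (q-s)(p - k + 2m_\ell)\,\Omega^{(4m_\ell)}_{k+p+2m_\ell,\, s+q-2m_\ell}\cdot v = 0 $$
for all $v \in M$ and all integers $k,s,q,p$. Given any target indices $(K,S)$, I can solve $k+p+2m_\ell = K$ and $s+q-2m_\ell = S$ while keeping both scalars $(q-s)$ and $(p-k+2m_\ell)$ non-zero (four parameters, two linear constraints, two open inequalities). Therefore $\Omega^{(4m_\ell)}_{K,S}$ annihilates $M$, and we may set $m_{\ell+1} = 4 m_\ell$, giving $m_\ell = 3 \cdot 4^{\ell-1}$.

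The heart of the argument is Theorem \ref{magic}, which is already in hand; everything else is structural. The one place to be careful is the collapse on the right-hand side of the identity when $m = r$: the two terms combine via the defining recurrence for the differentiators into a single $\Omega^{(4m_\ell)}$, which is what makes it possible to produce one higher-order operator rather than a pair. Given that, no further obstacle is expected.
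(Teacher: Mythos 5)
Your proposal is correct and takes essentially the same approach as the paper: induction on the length of the composition series, the base case supplied by Lemma \ref{basic}, and Theorem \ref{magic} applied to products of differentiators that kill $M$ because one factor maps $M$ into a proper submodule annihilated by the other. The only (harmless) variation is that you peel off a single simple quotient at each step and use monotonicity of annihilation in $m$ (via the recurrence $\Omega^{(m+1)}_{k,s}=\Omega^{(m)}_{k,s}-\Omega^{(m)}_{k-1,s+1}$) to force $m=r$, so you avoid the paper's separate $m\neq r$ argument (varying $p-k$ with $p+k$ fixed) and obtain the explicit bound $m_\ell = 3\cdot 4^{\ell-1}$.
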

\noindent
{\it Proof of Corollary \ref{ann}.} Let $M$ be a cuspidal $W_1$-module with a composition series of length $\ell$:
$$ 0 = M_0 \subset M_1 \subset M_2 \subset \ldots \subset M_{\ell} = M$$
with simple quotients $M_{i+1}/M_i$.
We give a proof by induction on $\ell$. Since non-trivial highest/lowest weight $W_1$-modules
are not cuspidal, the composition series of $M$ will have the modules
$T(\alpha,\beta)$, $\overline{T}(0,0)$ and the trivial $1$-dimensional module as possible simple quotients . 
By Lemma \ref{basic}, $\Omega^{(3)}_{k,s}$ belongs 
to the annihilator of every cuspidal module of length $\ell=1$. To prove the step of induction,
assume $\ell > 1$ and write $\ell = \ell_1 + \ell_2$ with $\ell_1, \ell_2 \geq 1$. By induction 
assumption there exist $m, r$ such that $\Omega^{(m)}_{k,s}$, $\Omega^{(r)}_{p,q}$
annihilate all cuspidal modules of lengths $\ell_1, \ell_2$ respectively. Then both
$\Omega^{(m)}_{k,s} \Omega^{(r)}_{p,q}$ and $\Omega^{(r)}_{p,q} \Omega^{(m)}_{k,s}$
annihilate $M$. Indeed, $\Omega^{(r)}_{p,q} M \subset M_{\ell_1}$ since 
$\Omega^{(r)}_{p,q} \in \Ann\left(M/M_{\ell_1}\right)$ and thus
$\Omega^{(m)}_{k,s} \Omega^{(r)}_{p,q} M = 0$. The argument for 
$\Omega^{(r)}_{p,q} \Omega^{(m)}_{k,s}$ is analogous.

If $m=r$ then Theorem \ref{magic} implies that $\Omega^{(4m)}_{k,s} \in \Ann(M)$.
If $m \neq r$ we get that for all $p, k, s, q$ the expression
$$ (p-k+2r) \Omega^{(2m+2r-1)}_{k+p+2r,s+q-2r}
-  (p-k+2m) \Omega^{(2m+2r-1)}_{k+p+2r-1,s+q-2r+1}$$
annihilates $M$. However we can vary the value of $p-k$ while keeping $p+k$ fixed.
This implies that $\Omega^{(2m+2r-1)}_{k+p+2r,s+q-2r} \in \Ann(M)$, which 
completes the proof of the corollary.
\hfill \qed

\ifnum \value{version}=\value{long} 
\begin{example}
Our result implies that $\Omega^{(12)}_{k,s}$ annihilates all cuspidal $W_1$-modules with a composition 
series of length 2. Such modules were classified by Martin-Piard in \cite{MarPia2}.
The answer turns out to be essentially the same as in the case of Lie algebra of vector fields
on a line, which was investigated earlier by Feigin-Fuks \cite{FF}. Feigin-Fuks listed explicit formulas
for such extensions, which are also valid for $W_1$. Here is an extension $M$ from their list:
$$ 0 \rightarrow T\left( \frac{7 - \sqrt{19}}{2}, \beta \right) \rightarrow M \rightarrow 
T\left( \frac{-5 - \sqrt{19}}{2}, \beta \right) \rightarrow 0$$
with basis $\{ u_p, w_p \, | \, p\in \beta+\Z \}$ and the $W_1$-action:
\begin{align*}
&e_k u_p = \left( p + \frac{7 - \sqrt{19}}{2} k \right) u_{p+k} , \\
&e_k w_p = \left( p + \frac{-5 - \sqrt{19}}{2} k \right) w_{p+k}\\
&+ \left( - \frac{22+5 \sqrt{19}}{4} k^7 - \frac{31+7\sqrt{19}}{2} k^6 p
- \frac{25 + 7 \sqrt{19}}{2} k^5 p^2 - 5 k^4 p^3 + 5 k^3 p^4 + 2 k^2 p^5 \right) u_{p+k} .
\end{align*}
We can see that $\Omega^{(0)}_{k,s} w_p = e_k e_s w_p$ has coefficients which are polynomials of
degree $8$ in $k, s$. Thus  $\Omega^{(9)}_{k,s} w_p = 0$. By inspecting the table of possible extensions 
given in \cite{FF} one can see that the lowest value of $m$ for which 
$\Omega^{(m)}_{k,s}$ annihilates all cuspidal modules of length $\ell=2$ is $m=9$. 
\end{example}
\fi
\noindent
{\it Proof of Theorem \ref{magic}.}
\begin{align*}
\sum_{i=0}^m \sum_{j=0}^r (-1)^{i+j} {m \choose i}  & {r \choose j} 
\left( \left\{ \Omega^{(m)}_{k-i,s-j} , \Omega^{(r)}_{q+i,p+j} \right\}
-       \left\{ \Omega^{(m)}_{k-i,q-j} , \Omega^{(r)}_{s+i,p+j} \right\} \right) \\
=  \sum_{i=0}^m \sum_{j=0}^r  \sum_{a=0}^m \sum_{b=0}^r & (-1)^{i+j+a+b}
{m \choose a} {r \choose b} {m \choose i} {r \choose j} \\
\times & \bigg( e_{k-i-a} e_{s-j+a} e_{q+i-b} e_{p+j+b}
+ e_{q+i-b} e_{p+j+b} e_{k-i-a} e_{s-j+a} \\
& \quad- e_{k-i-a} e_{q-j+a} e_{s+i-b} e_{p+j+b}
-  e_{s+i-b} e_{p+j+b} e_{k-i-a} e_{q-j+a} \bigg) . 
\end{align*}
Let us switch indices $a$ with $i$ and $b$ with $j$ in the last two summands. We get
\begin{align*}
 & \sum_{a,i=0}^m \sum_{b,j=0}^r  (-1)^{i+j+a+b}
{m \choose a} {r \choose b} {m \choose i} {r \choose j} \\
& {\hskip 2cm} \times \bigg( e_{k-i-a} e_{s-j+a} e_{q+i-b} e_{p+j+b}
+ e_{q+i-b} e_{p+j+b} e_{k-i-a} e_{s-j+a} \\
& {\hskip 2cm} \quad \quad- e_{k-i-a} e_{q+i-b} e_{s-j+a} e_{p+j+b}
-  e_{s-j+a} e_{p+j+b}e_{k-i-a} e_{q+i-b} \bigg) 
\end{align*}
\begin{align*}
& =  \sum_{a,i=0}^m \sum_{b,j=0}^r  (-1)^{i+j+a+b}
{m \choose a} {r \choose b} {m \choose i} {r \choose j} \\
& \times \bigg( e_{k-i-a} [e_{s-j+a},  e_{q+i-b}] e_{p+j+b}
+ e_{p+j+b}  e_{k-i-a} [ e_{q+i-b}, e_{s-j+a}] \\
& + [e_{q+i-b}, e_{p+j+b}] e_{k-i-a}  e_{s-j+a} 
  + e_{p+j+b} [e_{q+i-b},  e_{k-i-a}]  e_{s-j+a} \\ 
& + e_{p+j+b} [e_{k-i-a}, e_{s-j+a}] e_{q+i-b}
+ [e_{p+j+b}, e_{s-j+a}] e_{k-i-a} e_{q+i-b} \bigg) .
\end{align*}
Here the part involving the last $4$ summands vanishes. The argument in each case
is the same, so we will show this for the last term, which equals:
$$ \sum_{a,i=0}^m \sum_{b,j=0}^r  (-1)^{i+j+a+b}
{m \choose a} {r \choose b} {m \choose i} {r \choose j}
(s-p-2j+a-b) e_{s+p+a+b}  e_{k-i-a} e_{q+i-b} .$$
We can see that the resulting monomials are independent of $j$. Taking the sum
in $j$ we get zero, since for $r \geq 2$
$$\sum_{j=0}^r (-1)^j {r \choose j} = \sum_{j=0}^r (-1)^j j {r \choose j} = 0 .$$
The sum with the first 2 summands may be expressed in the following way:
\begin{align*}
& \sum_{a,i=0}^m \sum_{b,j=0}^r  (-1)^{i+j+a+b}
{m \choose a} {r \choose b} {m \choose i} {r \choose j} \\
& \times \bigg( [e_{k-i-a},  e_{p+j+b}] [e_{s-j+a},  e_{q+i-b}]
+ e_{k-i-a} [ [e_{s-j+a},  e_{q+i-b}], e_{p+j+b}]   \bigg) .
\end{align*}
The sum involving the last term again vanishes since the monomial in the resulting
expression
$$(q-s+i+j-a-b)(p-s-q-i+2j-a+2b) e_{k-i-a} e_{p+s+q+i+a}$$
is independent of $j$ and $b$ and evaluating the sum in both $j$ and $b$ we get
zero since every term in the expansion of the coefficient vanishes under summation in one 
of these two indices. Thus we are left with
\begin{align*}
 \sum_{a,i=0}^m & \sum_{b,j=0}^r  (-1)^{i+j+a+b}
{m \choose a} {r \choose b} {m \choose i} {r \choose j} \\
&\times (q-s+i+j-a-b)(p-k+i+j+a+b) e_{k+p-i+j-a+b} e_{s+q+i-j+a-b} .
\end{align*}
In the above expression everything except the factor $q-s+i+j-a-b$ is symmetric
in $\{ i, a\}$ and $\{j, b\}$. We can symmetrize in these pairs of variables and get
\begin{align*}
 (q-s) \sum_{a,i=0}^m & \sum_{b,j=0}^r  (-1)^{i+j+a+b}
{m \choose a} {r \choose b} {m \choose i} {r \choose j} \\
&\times (p-k+(i+a)+(j+b)) e_{k+p-i+j-a+b} e_{s+q+i-j+a-b} .
\end{align*}
Make a change of variables replacing $j$ with $r-j$ and $b$ with $r-b$:
\begin{align*} 
(q-s) \sum_{a,i=0}^m & \sum_{b,j=0}^r (-1)^{i+j+a+b}
{m \choose a} {r \choose b} {m \choose i} {r \choose j} \\
& \times (p-k+2r+(i+a)-(j+b)) e_{k+p+2r-i-j-a-b} e_{s+q-2r+i+j+a+b} .
\end{align*}
Set $u = i+j+a+b$, which ranges from $0$ to $2m+2r$. Then the coefficient
at $e_{k+p+2r-u} e_{s+q-2r+u}$ is the same as the coefficient at $t^u$ in
\begin{align*}
 (q-s) & \sum_{a,i=0}^m \sum_{b,j=0}^r  (-1)^{i+j+a+b}
{m \choose a} {r \choose b} {m \choose i} {r \choose j} 
(p-k+2r+(i+a)-(j+b)) t^{i+j+a+b} \\
 = & (q-s) \big( (p-k+2r) (1-t)^m (1-t)^m (1-t)^r (1-t)^r \\
& + 2 (1-t)^{2r+m} t \frac{d}{dt} (1-t)^m
- 2 (1-t)^{2m+r} t \frac{d}{dt} (1-t)^r \big) \\
 = & (q-s) \left( (p-k+2r) (1-t)^{2m+2r}
- 2m t (1-t)^{2m+2r-1} + 2r  t (1-t)^{2m+2r-1} \right) \\
 = & (q-s) \left( (p-k+2r) (1-t)^{2m+2r-1} -  (p-k+2m) t (1-t)^{2m+2r-1} \right) .
\end{align*}
The coefficient at $t^u$ is
$$(q-s)  \left( (p-k+2r) (-1)^u {2m+2r-1 \choose u} -  (p-k+2m) (-1)^{u-1}{2m+2r-1 \choose u-1} \right),$$
which is the same as the coefficient at $e_{k+p+2r-u} e_{s+q-2r+u}$ in
$$ (q-s) \left( (p-k+2r) \Omega^{(2m+2r-1)}_{k+p+2r,s+q-2r}
-  (p-k+2m) \Omega^{(2m+2r-1)}_{k+p+2r-1,s+q-2r+1} \right), $$
which completes the proof of the Theorem.
\hfill \qed

\begin{rem}
In the proofs of Theorem \ref{magic} and Corollary \ref{ann} we relied little
on the specific expressions for the structure constants for the Lie bracket in $W_1$ and its action on
simple cuspidal modules. What matters is the fact that the structure constants 
are polynomial. Thus our methods are applicable to a wider class of Lie algebras with
polynomial multiplication and their modules with polynomial action, which  were introduced in 
\cite{BB}.
\end{rem}
\ifnum \value{version}=\value{long} 
\begin{rem}
We can see from Theorem \ref{magic} that the subspace in $U(W_1)$ spanned by the anticommutators
of differentiators contains differentiators of higher order. Rather surprisingly, it appears that the subspace
spanned by the commutators of the differentiators does not contain any non-zero quadratic elements.
\end{rem}
\fi
Let us now state the analogue of Theorem \ref{magic} for the solenoidal Lie algebra
$W_\mu$. For $k, s, h \in \Gamma_\mu$, $m \geq 0$, define the differentiators in $U(W_\mu)$:
$$\Omega^{(m,h)}_{k,s} = \sum_{i=0}^m (-1)^i {m \choose i} e_{k-ih} e_{s+ih} .$$
We have the following
\begin{thm}
Let $m, r \geq 2$, $k,s,p,q,h \in \Gamma_\mu \subset \C$. Then
\begin{align*}
\sum_{i=0}^m \sum_{j=0}^r (-1)^{i+j} {m \choose i} {r \choose j}
\left( \left\{ \Omega^{(m,h)}_{k-ih,s-jh} , \Omega^{(r,h)}_{q+ih,p+jh} \right\}
-       \left\{ \Omega^{(m,h)}_{k-ih,q-jh} , \Omega^{(r,h)}_{s+ih,p+jh} \right\} \right)  \\
= (q-s) \left( (p-k+2rh) \Omega^{(2m+2r-1,h)}_{k+p+2rh,s+q-2rh}
-  (p-k+2mh) \Omega^{(2m+2r-1,h)}_{k+p+(2r-1)h,s+q-(2r-1)h} \right) .
\end{align*}
\label{solfor}
\end{thm}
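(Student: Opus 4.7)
The strategy is to carry the proof of Theorem \ref{magic} over to $W_\mu$ essentially verbatim, with $h$ inserted as a scaling factor on every difference that previously separated consecutive integer indices. The key observation is that the Lie bracket in $W_\mu$ has exactly the same form as in $W_1$, namely $[e_r, e_s] = (s-r)e_{r+s}$, with the sole modification that the indices now range over $\Gamma_\mu \subset \C$ rather than $\Z$. Since the proof of Theorem \ref{magic} relies only on this bracket relation together with purely combinatorial binomial identities, it should transfer to the solenoidal setting with only cosmetic changes.

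Concretely, I would begin by expanding both anticommutators on the left into a fourfold sum of products $e_{\cdot}e_{\cdot}e_{\cdot}e_{\cdot}$, with indices of the form $k-(i+a)h$, $s-(j-a)h$, etc.\ in place of the corresponding expressions $k-i-a$, $s-j+a$ from the proof of Theorem \ref{magic}. After relabeling $a \leftrightarrow i$ and $b \leftrightarrow j$ in the last two summands and reorganizing into six commutator terms, the four terms involving a single commutator vanish by the same argument as before: each resulting coefficient is independent of one of the summation indices (say $j$), and the identities $\sum_{j=0}^r (-1)^j \binom{r}{j} = \sum_{j=0}^r (-1)^j j \binom{r}{j} = 0$ for $r \geq 2$---which depend on $r$ but not on $h$---kill the sum. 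The surviving contribution is a double commutator term plus a nested commutator term; the latter again vanishes by a symmetry-in-$(j,b)$ argument identical to the one in the $W_1$ proof.

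Evaluating the remaining double commutator via $[e_A,e_B] = (B-A)e_{A+B}$ produces coefficients $(q-s) + (i+j-a-b)h$ and $(p-k) + (i+j+a+b)h$. Symmetrizing in the pairs $\{i,a\}$ and $\{j,b\}$ kills the $h$-dependent part of the first factor, pulling out the clean scalar $q-s$ exactly as in the $W_1$ case. After the change of variables $j \mapsto r-j$, $b \mapsto r-b$, the generating-function computation proceeds in parallel: the sum reduces to $(1-t)^{2m+2r-1}\bigl[(p-k+2rh) - (p-k+2mh)t\bigr]$, and reading off the coefficient of $t^u$ and re-summing in $u$ yields precisely the right-hand side of the theorem. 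The only genuine difficulty is the bookkeeping---verifying that the scalar $h$ threads correctly through the long chain of substitutions and symmetrizations---but no new algebraic idea beyond the proof of Theorem \ref{magic} is required.
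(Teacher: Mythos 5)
Your proposal is correct and coincides with the paper's own treatment: the authors state that the proof of Theorem \ref{solfor} is exactly the same as that of Theorem \ref{magic}, i.e.\ the $W_1$ computation carried over with $h$ scaling each index shift, which is precisely what you verify. The details you check (the binomial identities being $h$-independent, symmetrization in $\{i,a\}$, $\{j,b\}$ extracting the factor $q-s$, and the generating function reducing to $(1-t)^{2m+2r-1}\bigl[(p-k+2rh)-(p-k+2mh)t\bigr]$) all go through as you describe.
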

\begin{cor}
For every $\ell$ there exists $m$ such that for all $k, s, h \in \Gamma_\mu$ the
differentiator $\Omega^{(m,h)}_{k,s}$ annihilates every cuspidal $W_\mu$-module
with a composition series of length $\ell$.
\label{solann}
\end{cor}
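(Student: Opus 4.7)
The plan is to transplant the proof of Corollary \ref{ann} verbatim, using Theorem \ref{solfor} in place of Theorem \ref{magic} and Su's classification of simple cuspidal $W_\mu$-modules in place of Mathieu's. I would proceed by induction on the length $\ell$ of a composition series of $M$, exploiting the elementary monotonicity fact that if $\Omega^{(m,h)}_{k,s}$ annihilates a module then so does $\Omega^{(m',h)}_{k,s}$ for every $m' \geq m$; this follows at once from the recursion
\[
\Omega^{(m+1,h)}_{k,s} = \Omega^{(m,h)}_{k,s} - \Omega^{(m,h)}_{k-h,s+h},
\]
which in turn is Pascal's identity applied to the defining sum.

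For the base case $\ell = 1$, the simple cuspidal $W_\mu$-modules are, by Su's theorem recalled at the end of Section 2, the tensor modules $T(\alpha,\beta)$ on which $e_k$ acts by the degree-one polynomial $s+\alpha k$, together with $\overline{T}(0,0)$ and the trivial one-dimensional module. The same finite-difference argument that proves Lemma \ref{basic} then gives $\Omega^{(3,h)}_{k,s} \in \Ann(M)$ for every such $M$ and every $h \in \Gamma_\mu$, since $\Omega^{(3,h)}_{k,s}$ acts on a basis vector by a polynomial of degree at most $2$ in the summation index $i \in \{0,1,2,3\}$, and the operator $\sum_i (-1)^i \binom{3}{i}$ kills such polynomials.

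For the inductive step I would write $\ell = \ell_1 + \ell_2$ with $\ell_1,\ell_2 \geq 1$, choose (by induction and monotonicity) one integer $m \geq 2$ such that $\Omega^{(m,h)}_{k,s}$ annihilates every cuspidal $W_\mu$-module of length $\leq \max(\ell_1,\ell_2)$, and observe, exactly as in Corollary \ref{ann}, that both orderings of the product $\Omega^{(m,h)}_{k,s}\Omega^{(m,h)}_{p,q}$ annihilate $M$ (the right factor carries $M$ into the length-$\ell_1$ submodule, which is killed by the left factor). Hence every anticommutator on the left-hand side of Theorem \ref{solfor} with $r=m$ annihilates $M$, so the right-hand side does too. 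Telescoping the two $\Omega^{(4m-1,h)}$ terms via the Pascal recursion above collapses the right-hand side to
\[
(q-s)(p-k+2mh)\,\Omega^{(4m,h)}_{k+p+2mh,\,s+q-2mh}\,M \;=\; 0.
\]
Since $\Gamma_\mu$ has positive $\Z$-rank, for any target pair $K,S \in \Gamma_\mu$ we can choose $k,p,s,q \in \Gamma_\mu$ with $k+p+2mh = K$, $s+q-2mh = S$ and both scalar prefactors $q-s$ and $p-k+2mh$ nonzero, which proves $\Omega^{(4m,h)}_{K,S} \in \Ann(M)$ and closes the induction with $4m$ in place of $m$.

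I do not expect any serious obstacle: the argument is essentially formal manipulation built on Theorem \ref{solfor}. The only small points requiring care are the Pascal bookkeeping that recombines the two $\Omega^{(4m-1,h)}$ summands into a single $\Omega^{(4m,h)}$, and the verification that the base case really does reduce, as in Lemma \ref{basic}, to the polynomiality of the action of $e_k$ on each simple cuspidal $W_\mu$-module supplied by Su's list.
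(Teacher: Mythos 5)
Your proof is correct and follows essentially the same route as the paper's, which simply transplants the proof of Corollary \ref{ann}: induction on the composition length, the base case via the finite-difference polynomiality argument of Lemma \ref{basic} applied to Su's list, and the inductive step via Theorem \ref{solfor}. Your monotonicity observation (via the Pascal recursion) that lets you take $r=m$ and telescope to $\Omega^{(4m,h)}$ is a mild streamlining that avoids the paper's separate $m\neq r$ case, where it instead varies $p-k$ while keeping $p+k$ fixed; otherwise the arguments coincide.
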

The proofs of Theorem \ref{solfor} and Corollary \ref{solann} are exactly the same as 
in the case of $W_1$.

\section{Coinduced modules}
In this section $\LL$ will denote either $W_n$ or $W_\mu$. The goal of this section is to prove 
the following
\begin{thm}
Let $M$ be a cuspidal $\LL$-module satisfying $\LL M = M$. Then there exist a cuspidal 
$A\LL$-module $\hM$ and a surjective homomorphism of $\LL$-modules:
$$\hM \rightarrow M .$$
\label{proj}
\end{thm}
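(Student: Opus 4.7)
The plan is to realize $\hM$ inside the coinduced module $\Hom(A,M)$. Equip $\Hom(A,M)$ with the $A$-action $(f\varphi)(g) = \varphi(fg)$ and the $\LL$-action $(x\varphi)(g) = x\varphi(g) - \varphi(xg)$, where $xg$ is the derivation of $g \in A$ by $x$; the Leibniz rule then yields the compatibility condition \eqref{compat}. For $x \in \LL$ and $u \in M$ define $\psi(x,u) \in \Hom(A,M)$ by $\psi(x,u)(f) = (fx)u$. A direct computation shows $f\,\psi(x,u) = \psi(fx,u)$ and $y\,\psi(x,u) = \psi([y,x],u) + \psi(x, yu)$, so $\hM := \Span\{\psi(x,u) : x \in \LL,\, u \in M\}$ is an $A\LL$-submodule of $\Hom(A,M)$. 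Evaluation at $1 \in A$ gives an $\LL$-linear map $\pi : \hM \to M$, $\pi(\psi(x,u)) = xu$, surjective by the hypothesis $\LL M = M$.

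Since $\psi(x,u)$ has weight $\mathrm{wt}(x) + \mathrm{wt}(u)$, the space $\hM_\lambda$ is spanned by elements $\psi(x,u)$ with $x$ homogeneous of some weight $\alpha$ and $u \in M_{\lambda - \alpha}$, where $\alpha$ ranges over an infinite set. The bound on $\dim \hM_\lambda$ comes from the annihilator results of Section~3. A cuspidal module has finite composition length, so Corollary~\ref{solann} supplies an integer $m$ with $\Omega_{k,s}^{(m,h)} M = 0$ for all $k, s, h \in \Gamma_\mu$ in the solenoidal case $\LL = W_\mu$; for $\LL = W_n$ I first decompose $W_n = \bigoplus_{j=1}^n W_{\mu_j}$ as a vector space into solenoidal subalgebras and apply the corollary inside each summand. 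Evaluating $\sum_{i=0}^m (-1)^i \binom{m}{i} \psi(e_{k-ih}, e_{s+ih} u)$ at an arbitrary $t^r \in A$ collapses to $\Omega_{k+\mu\cdot r,\,s}^{(m,h)} u = 0$, producing the key relation
\begin{equation*}
\sum_{i=0}^m (-1)^i \binom{m}{i} \psi(e_{k-ih}, e_{s+ih} u) = 0 \text{ in } \hM.
\end{equation*}

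Specializing $s = 0$ and solving for $\psi(e_k,u)$ expresses it as a linear combination of the $\psi(e_{k-ih}, e_{ih} u)$ for $i = 1, \ldots, m$. Fixing a $\Z$-basis $h_1, \ldots, h_n$ of $\Gamma_\mu$ and using the pair $\pm h_j$ to shrink the $h_j$-component of $k$ from either side, I inductively rewrite every $\psi(e_k, u) \in \hM_\lambda$ as a combination of $\psi(e_{k'}, v) \in \hM_\lambda$ with $k'$ in a fixed finite subset $F \subset \Gamma_\mu$; this yields $\dim \hM_\lambda \le |F| \cdot C < \infty$, and invertibility of homogeneous elements of $A$ propagates this bound to every weight space, so $\hM$ is cuspidal. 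The main technical obstacle lies precisely in this last induction: the relation alters the second argument of $\psi$ simultaneously with the first, so I must verify carefully that iterating the reduction across all basis directions really terminates, with the successively modified vectors in the second slot remaining in the uniformly bounded weight spaces of $M$ rather than re-populating support at unreduced indices.
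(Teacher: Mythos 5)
Your setup coincides with the paper's: the coinduced module $\Hom(A,M)$, the $A$-cover spanned by the $\psi(x,u)$, the evaluation map $\pi$ with image $\LL M$, the reduction of $W_n$ to solenoidal subalgebras (Proposition \ref{Astr} and Theorem \ref{cusp}), and the relation $\sum_{i=0}^m(-1)^i\binom{m}{i}\psi(e_{k-ih},e_{s+ih}u)=0$ coming from Corollary \ref{solann}. However, the step you defer at the end --- that the reduction of indices terminates and hence that a weight space of $\hM$ is finite-dimensional --- is exactly the content of Theorem \ref{solcusp}, i.e.\ the heart of the whole argument, so as written there is a genuine gap. There is also a smaller unaddressed point hidden in ``solving for $\psi(e_k,u)$'': with $s=0$ the $i=0$ term of your relation is $\psi(e_k,e_0u)$, not $\psi(e_k,u)$, so you can only solve when $e_0$ acts by a nonzero scalar on the weight space containing $u$. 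The paper arranges this by normalizing $\beta=0$ whenever $\beta+\Gamma_\mu=\Gamma_\mu$ (so $e_0$ is invertible on every weight space except possibly $M_\beta$) and by writing $u=e_0v$ before applying the relation; the exceptional space $M_\beta$ causes no harm because it lies in the finite ``core'' where no reduction is needed.

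The termination worry you raise disappears once the induction is run on the index of the subspaces rather than on individual vectors. Fix the weight $\beta+s$ of $\hM$ and note that $\hM_{\beta+s}$ is spanned by the finite-dimensional subspaces $\psi(e_{s-k},M_{\beta+k})$, $k\in\Gamma_\mu$. The claim is that only $\|k\|\leq nm/2$ are needed, proved by induction on $\|p\|$: if $|p_j|>m/2$ for some $j$, take $h=\delta_j$ when $p_j<-m/2$ (and $h=-\delta_j$ when $p_j>m/2$), write $u=e_0v$, and the relation expresses $\psi(e_{s-p},u)$ through the elements $\psi(e_{s-p-ih},e_{ih}v)$ with $1\leq i\leq m$. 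The only property of the second slot that matters is that $e_{ih}v\in M_{\beta+p+ih}$, so each new term lies in the full subspace $\psi(e_{s-k'},M_{\beta+k'})$ with $k'=p+ih$ and $\|k'\|<\|p\|$; the modified vectors are simply absorbed into the weight spaces of $M$ (whose dimensions are uniformly bounded by cuspidality) and can never re-populate larger indices. Hence $\dim\hM_{\beta+s}<\infty$, and invertibility of homogeneous elements of $A$ spreads the bound to all weights, completing the proof exactly as in Theorems \ref{solcusp} and \ref{cusp} (for $W_n$ one also notes that $M$ restricted to each $W_{\mu_j}$ stays cuspidal because $\mu_j$ is generic). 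With these two points filled in, your argument is the paper's proof of Theorem \ref{proj}.
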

Our main tool will be the coinduction functor.  Let $M$ be an $\LL$-module.
\begin{dfn}
A module {\it coinduced} from $M$ is the space $\Hom (A, M)$ with the following action of $\LL$ and $A$:
\begin{align}
(x \phi) (f) &= x (\phi(f)) - \phi(x(f)) , 
\label{coindL}\\
(g \phi) (f) &= \phi(gf), \quad \text{ for } \phi \in \Hom(A, M), \; x \in \LL, \; f,g \in A.
\label{coindA}
\end{align}
\end{dfn}
\begin{prp}
(1) The coinduced module $\Hom(A, M)$ is an $A\LL$-module.

(2) The map
\begin{align*}
\pi: \Hom(A,M) &\rightarrow M, \\ 
\phi &\mapsto \phi(1),
\end{align*}
is a surjective homomorphism of $\LL$-modules.
\end{prp}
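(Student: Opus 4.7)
The proof splits into the two parts stated. The plan is to verify directly from the definitions \eqref{coindL} and \eqref{coindA} that all module axioms hold, using only the fact that $\LL$ acts on $A$ by derivations; once that is done, both claims in part (2) follow by evaluating the relevant action at $1\in A$.

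For part (1), there are three things to check: that \eqref{coindL} defines a Lie action of $\LL$, that \eqref{coindA} defines an associative unital action of $A$, and that the compatibility \eqref{compat} between them holds. The $A$-action axioms reduce immediately to $\phi((gh)f)=\phi(g(hf))$ and $\phi(1\cdot f)=\phi(f)$. For the Lie action, I would expand $(x(y\phi))(f)-(y(x\phi))(f)$ via \eqref{coindL} into the eight terms one obtains by applying an outer and an inner $\LL$-action; four of the cross terms cancel in pairs, and what remains is $[x,y](\phi(f)) - \phi([x,y](f))$, which is $([x,y]\phi)(f)$. The step that makes this work is the identity $xy(f)-yx(f)=[x,y](f)$ for the induced operators on $A$, which holds because $\LL\to\Der A$ is a Lie algebra homomorphism. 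The compatibility \eqref{compat} is a one-line rearrangement of $x(\phi(gf))-\phi(x(gf))$ using the Leibniz rule $x(gf)=(xg)f+g(xf)$ and the definitions.

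For part (2), $\LL$-linearity of $\pi$ follows by evaluating \eqref{coindL} at $f=1$: since $x$ acts on $A$ as a derivation and $1$ is the unit, $x(1)=0$, hence $(x\phi)(1)=x(\phi(1))=x\pi(\phi)$. For surjectivity, given $u\in M$ one needs a $\C$-linear $\phi\colon A\to M$ with $\phi(1)=u$: using the monomial basis $\{t^r\mid r\in\Z^n\}$ of $A$, set $\phi(1)=u$, $\phi(t^r)=0$ for $r\ne 0$ and extend linearly.

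I do not expect a serious obstacle; the content is purely formal, and the only minor subtlety is keeping track of the sign in \eqref{coindL} during the Lie bracket check, which is the usual sign appearing in coinduced/dual constructions when the algebra acts by derivations.
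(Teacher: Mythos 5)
Your proposal is correct and follows essentially the same route as the paper: expand $(x(y\phi))(f)-(y(x\phi))(f)$ into eight terms, cancel the four cross terms, and use that $\LL$ acts on $A$ by derivations for the Lie bracket and the Leibniz rule for the compatibility, with part (2) reducing to $x(1)=0$. The only difference is that you spell out the unital/associative $A$-action axioms and an explicit preimage for surjectivity, which the paper treats as obvious.
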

\begin{proof}
Let us first verify that \eqref{coindL} defines an $\LL$-module structure on
$\Hom(A, M)$:
\begin{align*}
& (x (y \phi))(f) - (y(x\phi))(f) 
= x((y \phi) (f)) - (y \phi) (xf) - y ((x \phi)(f)) + (x\phi)(yf) \\
& = x(y(\phi(f))) - x(\phi(yf)) - y(\phi(xf)) + \phi(y(xf)) \\
& -y (x(\phi(f))) + y(\phi(xf)) + x(\phi(yf)) - \phi(x(yf)) \\
&  = [x,y] (\phi(f)) - \phi([x,y] f) 
 = ([x,y] \phi)(f), \quad \text{ for } \phi \in \Hom(A,M), \; x,y \in\LL, \; f\in A . 
\end{align*}
Clearly, \eqref{coindA} defines an $A$-module structure on $\Hom(A, M)$. To show that
the $\LL$-module and the $A$-module structures are compatible, we need to check that
$$x (g \phi) = x(g) \phi + g (x \phi) .$$
Indeed,
\begin{align*}
(x(g) \phi) (f) + (g x(\phi))(f) = \phi(x(g) f) + x(\phi) (gf) 
= \phi(x(g) f) +  x(\phi(gf)) - \phi(x (gf)) \\
= \phi(x(g) f) + x(\phi(gf)) - \phi(x(g) f) - \phi(g x (f)) = x((g\phi)f) - (g \phi)(x f) 
=(x (g\phi))(f) .
\end{align*}
In order to establish the claim of part (2), we need to show that $x \phi$ is mapped to 
$x (\phi(1))$, i.e., $(x \phi)(1) = x (\phi(1))$, which is true since $x(1) = 0$. 
The surjectivity of $\pi$ is obvious.
\end{proof}

The shortcoming of the coinduced module $\Hom(A,M)$ is that it is too big. We are now
going to construct a smaller submodule in $\Hom(A,M)$ which will serve well for our purposes.
\begin{dfn}
An $A$-{\it cover} of an $\LL$-module $M$ is the subspace $\hM \subset \Hom(A,M)$ spanned
by the set $\{ \psi(x,u) \; | \;  x \in\LL, u \in M \}$, where $\psi (x, u) \in \Hom(A, M)$ is given by
$$\psi(x, u) (f) = (f x) u, \quad \text{for } f \in A.$$
\end{dfn}
\begin{prp}
(1) The action of $\LL$ and $A$ on $\hM$ is the following:
\begin{align}
y \psi(x, u) &= \psi([y,x], u) + \psi(x, yu), 
\label{covL} \\
g \psi(x, u) &= \psi(gx, u), \quad \text{for } x, y \in \LL, \; u \in M,\; g \in A.
\label{covA}
\end{align}

(2) If $M$ is a weight module then so is $\hM$.

(3) $\pi(\hM) = \LL M$.
\label{Astr}
\end{prp}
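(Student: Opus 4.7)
My plan is to prove all three parts by direct computation from the definitions of the coinduced module and of $\psi(x,u)$; no deep machinery is needed. The key algebraic input is the Lie-algebroid identity
$$[y, fx] = y(f)\, x + f [y,x] \quad \text{for } y, x \in \LL,\ f \in A,$$
which holds because $\LL$ is an $A$-module acting on $A$ by derivations (a direct check against the explicit bracket in $W_n$ and in $W_\mu$ confirms this). Once part (1) is verified, formulas \eqref{covL}--\eqref{covA} show that the $\LL$- and $A$-actions on $\Hom(A,M)$ preserve the spanning set $\{\psi(x,u)\}$, so $\hM$ is automatically an $A\LL$-submodule of $\Hom(A,M)$, not merely a subspace.

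For part (1), I start from the definitions:
$$(y \psi(x,u))(f) = y(\psi(x,u)(f)) - \psi(x,u)(yf) = y((fx) u) - ((yf)x) u.$$
The module axiom gives $y((fx)u) = [y, fx] u + (fx)(yu)$, and expanding the bracket via the identity above introduces a term $(y(f)\, x) u$ which cancels exactly against $((yf)x)u$, since $yf = y(f)$ by the derivation action and $(y(f))x = y(f)\cdot x$ by the $A$-module structure on $\LL$. What remains is $(f[y,x]) u + (fx)(yu) = \psi([y,x], u)(f) + \psi(x, yu)(f)$, establishing \eqref{covL}. The $A$-formula is even shorter: $(g \psi(x,u))(f) = \psi(x,u)(gf) = ((gf) x) u = (f (gx)) u = \psi(gx,u)(f)$, using associativity of the $A$-action on $\LL$.

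Part (2) is an immediate consequence of (1). Since $M$ is a weight module and $\LL$ is itself a weight module with respect to the adjoint action of the Cartan, I may span $\hM$ by elements $\psi(x,u)$ with $x \in \LL_\alpha$ and $u \in M_\lambda$ both homogeneous. Applying \eqref{covL} to $y = d_i$ (or $y = d_\mu$ in the solenoidal case) gives $d_i \psi(x,u) = \psi([d_i, x], u) + \psi(x, d_i u) = (\alpha_i + \lambda_i)\, \psi(x,u)$, so $\psi(x,u)$ is a weight vector of weight $\alpha + \lambda$. Thus $\hM$ decomposes into simultaneous eigenspaces of the Cartan. For part (3), a one-line evaluation suffices: $\pi(\psi(x,u)) = \psi(x,u)(1) = (1 \cdot x) u = xu$, whence $\pi(\hM) = \Span\{ xu \mid x \in \LL,\ u \in M \} = \LL M$.

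I do not anticipate any real obstacle; the only point requiring care is the sign convention in \eqref{coindL}, which is precisely what causes the term $y(f) x \cdot u$ to cancel rather than double in the derivation of \eqref{covL}. Everything else is routine bookkeeping, and the same argument works verbatim for $\LL = W_n$ and $\LL = W_\mu$ since both satisfy the Lie-algebroid identity.
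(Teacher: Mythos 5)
Your proof is correct and follows essentially the same route as the paper: the Lie-algebroid identity $[y,fx]=y(f)x+f[y,x]$ drives the computation of \eqref{covL}, the $A$-formula \eqref{covA} is the same one-line check, and parts (2) and (3) are handled exactly as in the paper (homogeneous spanning vectors for the weight decomposition, and evaluation at $1$ for $\pi$). No gaps.
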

\begin{proof}
 In proving (1) we are going to use the fact that the adjoint representation of $\LL$
is an $A\LL$-module:
$$[y, fx] = y(f) x + f [y, x] .$$
Then
\begin{align*}
&(y \psi(x,u))(f) = y( \psi(x,u)(f)) - \psi(x,u)(y f)
= y((fx)u) - ((yf) x) u \\
&= [y, fx] u + (f x) (y u) - ((yf) x) u 
= ((y f) x ) u + (f [y, x]) u + (f x) (y u) - ((y f) x) u \\
&= \psi( [y, x], u)(f) + \psi(x, yu) (f) .
\end{align*}
For the action of $A$ we have 
$$(g \psi(x,u))(f) = \psi(x,u) (gf) = ((fg) x) u = \psi(gx, u)(f) .$$
Let us prove the second part of the proposition. If $M$ is a weight module
then $\hM$ is spanned by $\psi(x, u)$ with $x$ and $u$ being eigenvectors with respect to the action
of the Cartan subalgebra of $\LL$. Then \eqref{covL} shows that $\psi(x,u)$ is also an eigenvector 
for the action of the Cartan subalgebra. Since $\hM$ is spanned by its weight vectors, it is a weight module.

Finally, for part (3) we note that $\pi (\psi(x, u)) = \psi(x, u)(1) = x u$, which implies the claim.
\end{proof}

\begin{rem}
It follows from \eqref{covL} that the $A$-cover $\hM$ can also be constructed as a quotient of $\LL \otimes M$
(where the first tensor factor is the adjoint module) by a submodule
$$ \left\{ \sum_i x_i \otimes u_i \; \big| \; \sum_i (f x_i) u_i = 0 \quad\text{for all } f \in A \right\} .$$
\end{rem}

\ifnum \value{version}=\value{long} 
\begin{example} 
One of the technical difficulties in working with $W_n$-modules is the existence of modules with
``holes'' and ``bumps'' at the zero weight space. Let us show how the use of $A$-cover remedies
this complication. Consider a $W_1$-module $M = \overline{T}(0,0) = \C [t, t^{-1}] / \left< 1 \right>$.
We fix a spanning set $u_j = t^j$ with a provision $u_0 = 0$. The action of $W_1$ is 
$e_k u_s = s u_{s+k}$. Let us construct the $A$-cover of $M$:
$$\psi(e_k, u_s) (t^m) = (t^m e_k) u_s = e_{k+m} u_s = s u_{s+k+m} .$$
Introduce $\theta_j \in \Hom(A, M)$, given by
$$\theta_j (t^m) = u_{m+j} .$$
Then we can see that $\psi(e_k, u_s) = s \theta_{k+s}$. Thus $\hM$ has a basis
$\{ \theta_j | j \in \Z \}$ with the $A W_1$-action $e_p \theta_j = j \theta_{j+p}$, $t^p \theta_j = \theta_{j+p}$.
Note that $\theta_0 \neq 0$ and $\hM \cong \C [t, t^{-1}]$. We see that taking the $A$-cover fills in the hole in $M$.
\end{example}
\begin{example}
Let $M$ be the adjoint representation for the Virasoro algebra, viewed as a $W_1$-module. The $W_1$-action
can be written as
\begin{align*}
e_k u_j & = (j-k) u_{j+k} + \delta_{k+j, 0} k^3 z, \\
e_k z & = 0.
\end{align*}
Then $\psi(e_k, u_j)(t^m) = e_{k+m} u_j = (j-k-m) u_{j+k+m} + \delta_{k+m+j,0} (-j)^3 z$,
while $\psi(e_k, z) = 0$.
Consider the following elements in $\Hom(A, M)$:
\begin{align*}
\tau_j (t^m) &= (j+m) u_{j+m}, \\
\theta_j (t^m) &= u_{j+m}, \\
\eta_j (t^m) &= \delta_{j+m,0} z .
\end{align*}
Then $\psi(e_k, u_j) = - \tau_{k+j} + 2 j \theta_{k+j} - j^3 \eta_{k+j}$. 
One can check that the $A$-action is $t^p \tau_j = \tau_{j+p}$,  $t^p \theta_j = \theta_{j+p}$, $t^p \eta_j = \eta_{j+p}$,  
while $W_1$ acts in the following way:
\begin{align*}
e_p \tau_j &= (j - 2p) \tau_{j+p} + 2 p^2 \theta_{j+p} - p^4 \eta_{j+p}, \\
e_p \theta_j &= (j-p) \theta_{j+p} + p^3 \eta_{j+p}, \\
e_p \eta_j &= (j+p) \eta_{j+p} .
\end{align*}
In this case $\hM$ is a weight module with $3$-dimensional weight spaces. The map $\pi: \hM \rightarrow M$
is given by $\pi(\tau_j) = j u_j$, $\pi(\theta_j) = u_j$, $\pi(\eta_j) = \delta_{j,0} z$.
\end{example}
\fi

Assume now that $M$ is a cuspidal $\LL$-module. Its $A$-cover $\hM$ is a weight module, however it is not
a priori clear that the weight spaces of $\hM$ are finite-dimensional. 
\ifnum \value{version}=\value{long} 
In the above examples this was the case due to the fact that the structure constants in the modules were polynomial.
\fi
We are going to prove next that $\hM$ is indeed cuspidal. The key ingredient in the proof is Corollary \ref{solann}.
We begin with the solenoidal Lie algebra.
\begin{thm}
Let $M$ be a cuspidal module for the solenoidal Lie algebra $W_\mu$. Then the $A$-cover $\hM$ of $M$ is
cuspidal.
\label{solcusp}
\end{thm}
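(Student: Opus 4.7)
The plan is to use Corollary~\ref{solann} to produce sufficiently many relations among the spanning vectors $\psi(e_r,u)$ of $\hat M_\nu$ to force each weight space of $\hat M$ to be finite-dimensional. Since $M$ is cuspidal with weight-space dimensions bounded by some $C$, and the simple cuspidal $W_\mu$-modules have weight spaces of dimension at most $1$, the composition length of $M$ is bounded by a function of $C$. Corollary~\ref{solann} then supplies an $m\ge 2$ (independent of $\nu$) such that $\Omega^{(m,h)}_{k,s}\in\Ann(M)$ for every $k,s,h\in\Gamma_\mu$.

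The first step would be to establish the key identity in $\hat M$: for every $u\in M$ and every $k,s,h\in\Gamma_\mu$,
$$\sum_{i=0}^{m}(-1)^i\binom{m}{i}\psi(e_{k-ih},e_{s+ih}u)=0.$$
This follows by evaluating the left-hand side at an arbitrary $f=t^q\in A$: since $f e_{k-ih}=e_{k-ih+\mu\cdot q}$ in $W_\mu$, the result is $\Omega^{(m,h)}_{k+\mu\cdot q,s}u$, which vanishes by hypothesis. Next I would observe that multiplication by any $t^p\in A$ acts injectively on $\hat M$: if $t^p\sum_i c_i\psi(e_{r_i},u_i)=\sum_i c_i\psi(e_{r_i+\mu\cdot p},u_i)$ vanishes, then its value at $t^q$ coincides with the value of the original sum at $t^{p+q}$, so the original must already be zero. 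Combined with the invertibility of $t^p$ in $A$, this gives that $t^p:\hat M_\nu\to \hat M_{\nu+\mu\cdot p}$ is a bijection; hence all weight spaces of $\hat M$ within the coset $\beta+\Gamma_\mu$ share a common dimension, and it suffices to bound $\dim \hat M_\nu$ for a single fixed $\nu$.

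Fix such a $\nu$ and set $V_r:=\psi(e_r,M_{\nu-r})\subseteq \hat M_\nu$; then $\dim V_r\le C$ and $\hat M_\nu=\sum_{r\in\Gamma_\mu}V_r$. Specializing the key identity with $s=0$ and solving for the $i=0$ term gives
$$\psi(e_k,u)=-\sum_{i=1}^{m}(-1)^i\binom{m}{i}\psi(e_{k-ih},e_{ih}u).$$
Since $e_{ih}u\in M_{\nu-k+ih}$, this translates into $V_k\subseteq V_{k-h}+V_{k-2h}+\cdots+V_{k-mh}$ for every nonzero $h\in\Gamma_\mu$; replacing $h$ by $-h$ yields the reverse containment $V_k\subseteq V_{k+h}+\cdots+V_{k+mh}$. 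A bidirectional induction on $j$ then collapses any one-dimensional orbit,
$$\sum_{j\in\Z}V_{r_0+jh}\;=\;V_{r_0}+V_{r_0+h}+\cdots+V_{r_0+(m-1)h},$$
a subspace of dimension at most $mC$.

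Finally I would fix a $\Z$-basis $h_1,\ldots,h_n$ of the lattice $\Gamma_\mu$ and apply this collapse in turn along each direction, so that after $n$ reductions
$$\hat M_\nu\subseteq\sum_{0\le i_1,\ldots,i_n<m}V_{i_1h_1+\cdots+i_nh_n},$$
giving $\dim \hat M_\nu\le m^nC$ uniformly in $\nu$. Thus $\hat M$ is cuspidal. The main obstacle is the very first step: packaging Corollary~\ref{solann} into a clean relation in $\hat M$, which is where the quadratic identity does its work. Once that identity is in hand, the remainder is a routine finite-difference reduction from the a priori infinite sum $\sum_{r\in\Gamma_\mu}V_r$ to the image of a finite box in $\Gamma_\mu$.
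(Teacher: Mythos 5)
Your overall strategy is the paper's own: use Corollary~\ref{solann} to get the relations $\sum_{i=0}^m(-1)^i\binom{m}{i}\psi(e_{k-ih},e_{s+ih}u)=0$ in $\hM$, use the $A$-action to reduce to a single weight space, and then run a finite-difference reduction to a finite box of indices (your cube $0\le i_1,\ldots,i_n<m$ plays the role of the paper's norm ball $\|k\|\le nm/2$). But there is a genuine misstep in the key reduction. With $s=0$ the $i=0$ term of your identity is $\psi(e_k,e_0u)$, not $\psi(e_k,u)$: since $e_0=d_\mu$ acts on $M_{\nu-k}$ by the scalar $\nu-k$, the correct consequence is
\begin{equation*}
(\nu-k)\,\psi(e_k,u)\;=\;-\sum_{i=1}^{m}(-1)^i\binom{m}{i}\psi(e_{k-ih},e_{ih}u),
\end{equation*}
so you may solve for $\psi(e_k,u)$ only when $\nu-k\neq 0$. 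For $u$ of weight zero (which occurs exactly when $0\in\beta+\Gamma_\mu$, at the single index $k=\nu$) the relation says nothing about $\psi(e_k,u)$, and the containment $V_k\subseteq V_{k-h}+\cdots+V_{k-mh}$ is not established there; as you wrote it, the displayed formula is simply false for such $u$. This is precisely the point the paper takes care of: it normalizes $\beta=0$ in the integral case so that $e_0$ acts by a nonzero scalar on every weight space except $M_\beta$, writes $u=e_0v$ before applying the differentiator relation, and only ever performs the reduction at weights of large norm, where this inversion is available.

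The good news is that the flaw is local and repairable within your scheme: either adopt the paper's normalization and substitute $u=e_0v$ (so the $i=0$ term is literally $\psi(e_k,u)$), or simply note that the reduction $V_k\subseteq\sum_{i=1}^m V_{k\mp ih}$ holds for all $k\neq\nu$ and keep the single exceptional space $V_\nu$ (of dimension at most $C$) as one extra generator in your collapse; the bound becomes $\dim\hM_\nu\le(m^n+1)C$, which is all you need. Two smaller points: your claim that the composition length of $M$ is bounded in terms of $C$ deserves a sentence of justification (the trivial module is supported only at weight $0$, so its multiplicity is controlled by $\dim M_0\le C$, while each dense factor contributes to every nonzero weight space), and your bidirectional collapse along a line must route around the exceptional index as above; with these adjustments the argument is correct and is essentially the paper's proof.
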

\begin{proof}
Without loss of generality let us assume that $\beta = 0$ in case when $\beta+\Gamma_\mu = \Gamma_\mu$.
This ensures that $e_0$ acts with a non-zero scalar on every weight space of $M$ different from $M_\beta$.

For $k \in \Gamma_\mu$, $\lambda \in \beta+ \Gamma_\mu$ denote by $\psi(e_k, M_\lambda)$ a subspace
$$\psi(e_k, M_\lambda) = \left\{ \psi(e_k, u) \; \big| \; u \in M_\lambda \right\} \subset \hM .$$
Since weight spaces of $M$ are finite dimensional, so are the spaces $\psi(e_k, M_\lambda)$.

Since $\hM$ is an $A$-module, it is sufficient to show that one of its weight spaces is finite-dimensional, then all other weight
spaces will have the same dimension. Fix a weight $\beta + s$, $s \in \Gamma_\mu$ and let us prove that $\hM_{\beta+s}$
is finite-dimensional. The space $\hM_{\beta+s}$ is spanned by the set
$$ \mathop\cup\limits_{k \in \Gamma_\mu} \psi(e_{s-k}, M_{\beta+k}) .$$
Choose a basis $\left\{ \delta_1, \ldots, \delta_n \right\}$ of the lattice $\Gamma_\mu$ and introduce a norm on $\Gamma_\mu$:
$$\| k \| = \sum\limits_{j=1}^n |k_j|, \quad \text{where } k =  \sum\limits_{j=1}^n k_j \delta_j .$$
By Corollary \ref{solann} there exists $m \in \N$ such that 
$\Omega^{(m,h)}_{p,q}$
belongs to the annihilator of $M$ for all $p, q, h \in \Gamma_\mu$.

We claim that $\hM_{\beta+s}$ is equal to
\begin{equation}
\Span \left\{ \psi(e_{s-k}, M_{\beta+k}) \; \big| \; \| k \| \leq \frac{nm}{2} \right\} .
\label{span}
\end{equation}
To prove this claim we need to show that for all $p\in \Gamma_\mu$ and for all $u \in M_{\beta+p}$,
$\psi(e_{s-p}, u)$ belongs to \eqref{span}. We prove this by induction on $\| p \|$. If $|p_j| \leq \frac{m}{2}$
for all $j = 1, \ldots, n$, the claim holds trivially. Suppose that for some $j$ we have $|p_j| > \frac{m}{2}$.
Let us assume $p_j < - \frac{m}{2}$, the case $p_j >  \frac{m}{2}$ is treated in the same way.
We have that $p + \delta_j, p+ 2\delta_j, \ldots, p+m\delta_j$ all have norms less than $\| p \|$.
Since $e_0$ acts on $M_{\beta+p}$ with a non-zero scalar, we can write $u = e_0 v$ for some $v \in M_{\beta+p}$.
Let us verify that 
$$\sum_{i=0}^m (-1)^i {m \choose i} \psi(e_{s-p-i\delta_j},  e_{i\delta_j} v) = 0$$
in $\hM$. Indeed,
\begin{align*}
\sum_{i=0}^m (-1)^i {m \choose i} &\psi(e_{s-p-i\delta_j},  e_{i\delta_j} v) (t^r) 
= \sum_{i=0}^m (-1)^i {m \choose i} e_{s+\mu\cdot r-p-i\delta_j} e_{i\delta_j} v \\
&= \Omega^{(m, \delta_j)}_{s+\mu\cdot r -p, 0} v = 0.
\end{align*}
Thus
\begin{equation}
\psi(e_{s-p}, u) = - \sum_{i=1}^m (-1)^i {m \choose i} \psi(e_{s-p-i\delta_j},  e_{i\delta_j} v). 
\label{expr}
\end{equation}
By induction assumption the right hand side of \eqref{expr} belongs to \eqref{span}, and so does $\psi(e_{s-p}, u)$,
which proves the claim. Hence
$$\hM_{\beta+s} =\Span \left\{ \psi(e_{s-k}, M_{\beta+k}) \; \big| \; \| k \| \leq \frac{nm}{2} \right\} $$
and it is finite-dimensional. The theorem is proved.
\end{proof}

Using this result for the solenoidal Lie algebras we can easily establish its analogue for the Lie algebra of vector fields
on a torus.

\begin{thm}
Let $M$ be a cuspidal module for the Lie algebra $W_n$ of vector fields on a torus. Then its $A$-cover $\hM$ is cuspidal.
\label{cusp}
\end{thm}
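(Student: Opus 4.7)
The plan is to reduce this statement to Theorem \ref{solcusp} via the vector-space decomposition of $W_n$ into solenoidal subalgebras that was highlighted in Section 2. Choose $n$ linearly independent generic vectors $\mu_1, \ldots, \mu_n \in \C^n$ so that $d_{\mu_1}, \ldots, d_{\mu_n}$ form a basis of the Cartan subalgebra of $W_n$. Then
$$W_n = \bigoplus_{i=1}^n A d_{\mu_i} = \bigoplus_{i=1}^n W_{\mu_i}$$
as $A$-modules and in particular as vector spaces.

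The key observation is that the genericity of each $\mu_i$ means the map $\beta + \Z^n \to \C$, $\lambda \mapsto \mu_i \cdot \lambda$, is injective. Consequently, any weight module $N$ supported on the single coset $\beta + \Z^n$ has its $W_n$-weight decomposition coinciding with its decomposition into $d_{\mu_i}$-eigenspaces: each $W_n$-weight space $N_\lambda$ equals the $d_{\mu_i}$-eigenspace with eigenvalue $\mu_i \cdot \lambda$. Applying this to $M$, cuspidality of $M$ as a $W_n$-module immediately implies cuspidality of $M$ as a $W_{\mu_i}$-module, with the same uniform bound on weight-space dimensions.

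For each $i$, let $\hM^{(i)} \subset \Hom(A, M)$ denote the $A$-cover of $M$ with respect to the $W_{\mu_i}$-action. Since $f x_i \in W_{\mu_i}$ whenever $f \in A$, $x_i \in W_{\mu_i}$, the defining formula $\psi(x_i, u)(f) = (f x_i) u$ is unambiguous and exhibits $\hM^{(i)}$ as the subspace of $\hM$ spanned by the $\psi(x_i, u)$ with $x_i \in W_{\mu_i}$. By Theorem \ref{solcusp}, each $\hM^{(i)}$ is cuspidal as a $W_{\mu_i}$-module. Formula \eqref{covL} shows that $\hM^{(i)}$ is supported on $\beta + \Z^n$ and is spanned by $W_n$-weight vectors, so the key observation applies again: each $W_n$-weight space of $\hM^{(i)}$ coincides with a $d_{\mu_i}$-eigenspace and is therefore bounded uniformly by some constant $C_i$.

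Finally, since $W_n = \sum_{i=1}^n W_{\mu_i}$ as a vector space and $\psi$ is linear in its first argument, we have $\hM = \sum_{i=1}^n \hM^{(i)}$. Because each $\hM^{(i)}$ is $W_n$-weight graded, the decomposition restricts to every weight space, giving $\hM_\lambda = \sum_{i=1}^n (\hM^{(i)})_\lambda$, and hence $\dim \hM_\lambda \leq \sum_{i=1}^n C_i$ for all $\lambda$. This proves that $\hM$ is cuspidal as a $W_n$-module. The only conceptual point that requires care is the genericity argument that identifies the $W_{\mu_i}$- and $W_n$-weight decompositions; once that is in place, the deep input is Theorem \ref{solcusp} and no further analysis is needed.
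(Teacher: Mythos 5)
Your proof is correct and follows essentially the same route as the paper: decompose $W_n = \bigoplus_i W_{\mu_i}$ into solenoidal subalgebras with each $\mu_i$ generic, note that $M$ restricted to each $W_{\mu_i}$ stays cuspidal, apply Theorem \ref{solcusp} to bound the span of the $\psi(t^k d_{\mu_i}, \cdot)$ in each weight space, and sum over $i$. Your explicit justification of the identification of $W_n$-weight spaces with $d_{\mu_i}$-eigenspaces via genericity, and the resulting uniform bound $\sum_i C_i$, only spell out steps the paper leaves implicit.
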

\begin{proof}
Fix a basis $\{ \mu_1, \ldots, \mu_n \}$ of $\C^n$ such that every $\mu_j$ is generic.
Then as a vector space we can decompose $W_n$ into a direct sum of its subalgebras:
$$W_n = W_{\mu_1} \oplus \ldots \oplus W_{\mu_n} .$$
The module $M$ when restricted to each solenoidal Lie algebra $W_{\mu_j}$ remains cuspidal.
Let $\hM$ be the $A$-cover of the $W_n$-module $M$. The weight space $\hM_\lambda$ is spanned
by the set
$$\left\{ \psi(t^k d_{\mu_j} ,  M_{\lambda-k}) \; \big| \; k\in\Z^n, \,  j = 1, \ldots, n \right\} .$$
By Theorem \ref{solcusp} a subspace spanned by
$$\left\{ \psi(t^k d_{\mu_j} , M_{\lambda-k}) \; \big| \; k\in\Z^n \right\} $$
is finite-dimensional for each $j = 1, \ldots, n$. Thus $\hM_\lambda$ is finite-dimensional as well. Since
$\hM$ is an $A$-module, it is cuspidal.
\end{proof}

As a corollary of Theorems \ref{solcusp} and \ref{cusp} we obtain Theorem \ref{proj} by applying Proposition
\ref{Astr} (3). 
\ifnum \value{version}=\value{long} 
We conjecture that in fact the condition $M = \LL M$ in Theorem \ref{proj}
is superfluous.

We have thus established a close relation between cuspidal $W_n$-modules and cuspidal $AW_n$-modules.
The general structure of cuspidal $AW_n$-modules is well-understood. These modules are parametrized by finite-dimensional 
representations of a certain infinite-dimensional Lie algebra \cite{Jet}. Let us recall that construction.
Let $\J = \Der \C[ t_1, \ldots, t_n]$, which is the {\it jet} Lie algebra of $W_n$ (see \cite{BIM} for the definition
of a jet of a Lie algebra). The Lie algebra $\J$ has a natural $\Z$-grading
$$\J = \mathop\oplus\limits_{k=-1}^\infty \J_{k} ,$$
where $\J_{-1}$ is spanned by $\frac{\partial}{\partial t_1}, \ldots, \frac{\partial}{\partial t_n}$ and
$\J_0 \cong \gl_n$. Set 
$$\J_+ = \mathop\oplus\limits_{k=0}^\infty \J_{k} .$$
If $(V, \rho)$ is a finite-dimensional representation of $\J_+$ then 
\begin{equation}
\rho(\J_k) = 0 \quad \text{for } k\gg 0.
\label{locnil}
\end{equation}
\begin{thm} (\cite{Jet})
Let $M$ be a cuspidal $AW_n$-module with the set of weights $\beta + \Z^n$ for some $\beta \in \C^n$.
Then there exists a finite-dimensional representation $(V, \rho)$ of $\J_+$ such that
$$M \cong t^\beta \C [t_1^{\pm 1}, \ldots, t_n^{\pm 1} ] \otimes V$$
with the natural action of $A$ and the following $W_n$-action:
\begin{equation}
(t^m d_j) (t^s \otimes v) = s_j t^{s+m} \otimes v 
+ \sum_{k \in \Z_+^n \backslash \{ 0 \} } \frac{m^k}{k!}  t^{s+m} \otimes \rho\left(t^k \frac{\partial}{\partial t_j}\right) v ,
\label{AWcusp}
\end{equation}
where $m \in \Z^n$, $s \in \beta + \Z^n$, 
 $m^k = m_1^{k_1} \ldots m_n^{k_n}$, $k! = k_1 ! \ldots k_n !$ and $v \in V$.
\label{jets}
\end{thm}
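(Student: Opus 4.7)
The first step is structural. Cuspidality together with invertibility of nonzero homogeneous elements of $A$ makes multiplication by $t^m$ an isomorphism $M_\lambda \to M_{\lambda+m}$, so all weight spaces of $M$ share a common finite dimension $N$ and $M$ is free of rank $N$ as an $A$-module. After replacing $\beta$ by a suitable representative of its coset modulo $\Z^n$, we may assume $\beta_j \neq 0$ for every $j$. Setting $V := M_\beta$, we obtain a graded $A$-module isomorphism
$$M \;\cong\; t^\beta\,\C[t_1^{\pm 1},\ldots,t_n^{\pm 1}]\otimes V,$$
with $A$ acting by multiplication on the first tensor factor.

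Next, transfer the $W_n$-action to $V$. Since $t^m d_j$ has $\Z^n$-degree $m$, it sends $\C t^\beta\otimes V$ into $\C t^{\beta+m}\otimes V$, and we obtain unique operators $P^j_m \in \Hom(V,V)$ via $(t^m d_j)(t^\beta\otimes v) = t^{\beta+m}\otimes P^j_m(v)$. The compatibility \eqref{compat} combined with the derivation action on $A$ then determines $(t^m d_j)(t^{\beta+r}\otimes v)$ for every $r \in \Z^n$ and automatically produces the $s_j$ factor in \eqref{AWcusp}. In particular the Cartan action forces $P^j_0 = \beta_j\,\mathrm{Id}_V$, which is invertible by our choice of $\beta$. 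Matching the full formula \eqref{AWcusp} therefore reduces the theorem to two tasks: show that $m\mapsto P^j_m$ is polynomial in $m \in \Z^n$, and interpret its Taylor coefficients as a representation of $\J_+$ on $V$.

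The polynomial dependence is the main obstacle, and this is precisely where Corollary \ref{solann} enters. Fix a generic $\mu \in \C^n$. The restriction of $M$ to the solenoidal subalgebra $W_\mu \subset W_n$ is a cuspidal $W_\mu$-module whose composition length is bounded in terms of $N$, because every simple cuspidal $W_\mu$-module has weight spaces of dimension at most one. Corollary \ref{solann} then produces an integer $m_0 = m_0(N)$, independent of the other parameters, such that $\Omega^{(m_0,h)}_{p,q}$ annihilates $M$ for all $p,q,h \in \Gamma_\mu$. Rewriting this quadratic annihilation in the trivialization $A\otimes V$, and using invertibility of $P^j_0$ to cancel a scalar left factor, one converts it into a genuine vanishing of the $m_0$-th finite difference of $m\mapsto P^j_m$ along the direction $h$. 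Letting $\mu$ and $h$ vary so that $h$ ranges over a spanning set of directions in $\Z^n$, we conclude that each $P^j_m$ is polynomial in $m$ of total degree bounded in terms of $N$.

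Finally, expand
$$P^j_m \;=\; \beta_j\,\mathrm{Id}_V \;+\; \sum_{k \in \Z_+^n\setminus\{0\}} \frac{m^k}{k!}\,\rho_{k,j}$$
as a finite Taylor series and define $\rho(t^k\,\partial/\partial t_j) := \rho_{k,j}$. Substituting this expansion into
$$(t^m d_a)(t^{m'} d_b\cdot v) - (t^{m'} d_b)(t^m d_a\cdot v) \;=\; [t^m d_a,\,t^{m'} d_b]\cdot v$$
together with $[t^m d_a,\,t^{m'} d_b] = m'_a t^{m+m'} d_b - m_b t^{m+m'} d_a$, and comparing the coefficient of each bihomogeneous monomial in $(m,m')$, yields precisely the bracket relations of $\J_+$ among the $\rho_{k,j}$. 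Thus $(V,\rho)$ is a $\J_+$-module, finite-dimensionality of $V$ gives \eqref{locnil}, and reassembling the pieces recovers \eqref{AWcusp}, completing the proof.
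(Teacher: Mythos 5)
You should first note that the paper does not prove Theorem \ref{jets} at all: it is imported from \cite{Jet}, and the argument there does not use the differentiators of Section 3. So what you propose is an independent derivation, and it is a genuinely different route: you reduce everything to the operators on a single weight space and then invoke Corollary \ref{solann}. Your structural reductions are correct and well chosen: freeness of $M$ over $A$, the choice $V=M_\beta$, the observation that the $A$-compatibility \eqref{compat} forces the correction operators $Q^j_m:=P^j_m-\beta_j\,\mathrm{Id}$ to be independent of the weight and automatically produces the $s_j$ term in \eqref{AWcusp}, and the final generating-function step showing that polynomiality of $m\mapsto Q^j_m$ together with the $W_n$-bracket gives exactly the $\J_+$-relations among the Taylor coefficients. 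The theorem does indeed reduce to polynomiality, and deriving that from the annihilation of $M$ by solenoidal differentiators is a viable (and attractive) plan.

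The gap is in the pivotal linearization step, which fails as you state it. In $\Omega^{(m_0,h)}_{k,q}=\sum_i(-1)^i\binom{m_0}{i}e_{k-ih}e_{q+ih}$ both indices move with $i$, so no factor is the scalar $e_0$ uniformly in $i$, and invertibility of the single operator $P^j_0=\beta_j\,\mathrm{Id}$ does not let you ``cancel a scalar left factor'' inside a sum of products of non-commuting operators: the annihilation identity is genuinely quadratic in the unknowns. It can be linearized, but by a different mechanism. Writing the $W_\mu$-action as $e_{\mu\cdot m}(t^s\otimes v)=t^{s+m}\otimes\bigl(\mu\cdot s+R(m)\bigr)v$ with $R(m)=\sum_j\mu_jQ^j_m$, the vanishing of $\Omega^{(m_0,h)}_{k,q}$ on the weight space of $t^s$ is an identity in $\mathrm{End}(V)$ which is a quadratic polynomial in the scalar $c=\mu\cdot s$; since $c$ runs over infinitely many values, each coefficient vanishes separately. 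For $m_0\ge 2$ the coefficient of $c$ gives $\sum_i(-1)^i\binom{m_0}{i}\bigl(R(k'-ih')+R(q'+ih')\bigr)=0$ for all $k',q',h'\in\Z^n$, so $\sum_i(-1)^i\binom{m_0}{i}R(k'-ih')$ is independent of $k'$, i.e.\ the $(m_0+1)$-st finite difference of $R$ along every direction $h'$ vanishes; taking $h'$ to be the coordinate directions gives polynomiality of $R$, and varying $\mu$ over a basis of generic vectors (as in the proof of Theorem \ref{cusp}) separates the individual $Q^j_m$. With this replacement your argument goes through, and the normalization $\beta_j\neq 0$ becomes unnecessary. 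Two smaller points: the order you obtain is $m_0+1$, not $m_0$ (harmless), and, exactly as in the paper's own use of Corollary \ref{solann}, one should say a word about why the restriction of $M$ to $W_\mu$ has a finite composition series; your bound on the length is fine once existence is granted.
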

 
In particular, one can easily obtain from Theorem \ref{jets} a classification of simple cuspidal $AW_n$-modules (\cite{Rao2}, \cite{Jet}),
which we will present in the next section.

\begin{rem} Note that \eqref{AWcusp} together with \eqref{locnil}
implies that a cuspidal $AW_n$-module always has polynomial structure constants, i.e., it
is a module with a polynomial action.
\end{rem}

We conclude this section with the discussion of duality in the category of cuspidal modules.

Define the graded dual of a weight module $M$ as
$$M^* = \mathop\oplus\limits_{\lambda} (M_\lambda)^* .$$
It is clear that $M^*$ is again a weight module.
\begin{lem}
If $M$ is an $A\LL$-module then its graded dual $M^*$ is also an $A\LL$-module.
\end{lem}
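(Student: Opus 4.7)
The plan is to equip $M^*$ with the standard contragredient action of $\LL$ and the dual action of $A$. Concretely, for $\phi \in M^*$, $x \in \LL$, $f \in A$ and $v \in M$, I would set
$$(x \phi)(v) = -\phi(xv), \qquad (f \phi)(v) = \phi(fv).$$
That the first formula yields a Lie algebra action and the second an associative algebra action is routine and I would dispatch it without comment, as it is the standard contragredient construction for Lie modules and the standard dual construction for modules over a commutative algebra.

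The first substantive step is to verify that both actions stay inside the graded dual $M^* = \mathop\oplus\limits_\lambda (M_\lambda)^*$ rather than escaping into the full algebraic dual $\prod_\lambda (M_\lambda)^*$. For a homogeneous $x \in \LL_\gamma$ and $\phi \in (M_\lambda)^*$, the functional $x\phi$ vanishes on every weight space except $M_{\lambda - \gamma}$, because $xv \in M_{\mu + \gamma}$ whenever $v \in M_\mu$ and $\phi$ kills everything outside $M_\lambda$; thus $x\phi \in (M_{\lambda - \gamma})^* \subset M^*$ and has weight $\gamma - \lambda$. The same weight bookkeeping works for $f \in A_\gamma$, and extending by linearity we conclude that $M^*$ is closed under both actions and remains a weight module with $(M^*)_{-\lambda} \cong (M_\lambda)^*$.

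The only remaining point is the compatibility \eqref{compat} on $M^*$. Evaluating both sides of $x(f\phi) = (xf)\phi + f(x\phi)$ on an arbitrary $v \in M$, the left side is $-\phi(f \cdot xv)$, while the right side is $\phi((xf)v) - \phi(x \cdot fv)$; expanding $x \cdot fv = (xf)v + f \cdot xv$ via the Leibniz rule \eqref{compat} in $M$ collapses the right side to $-\phi(f \cdot xv)$ as well. No step here is a genuine obstacle; the only thing worth remarking is that the sign in the $\LL$-action (and its absence in the $A$-action) is precisely what makes the grading and the compatibility work out simultaneously.
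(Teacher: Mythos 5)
Your proof is correct and is exactly the routine verification the paper omits: the contragredient $\LL$-action and dual $A$-action preserve the graded dual by weight bookkeeping, and the Leibniz rule \eqref{compat} on $M$ collapses the compatibility identity on $M^*$. Your sign convention $(x\phi)(v)=-\phi(xv)$ also agrees with the one the paper implicitly uses later (in the computation for $\pi^*$), so nothing further is needed.
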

Verification of this lemma is straightforward and we omit its proof.

\begin{prp}
Let $M$ be a weight $\LL$-module. Then the map
$$ \pi^*: \; M \rightarrow \left( \widehat{M^*} \right)^*$$
given by
$$ \pi^* (u) (\psi(x, \xi)) = \xi (x u), \quad \text{for } u\in M, \; x \in\LL, \; \xi \in M^*, $$
is a homomorphism of $\LL$-modules with kernel $\{ u\in M \,|\, \LL u = 0 \}$.
\end{prp}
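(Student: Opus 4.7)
The plan is to proceed in three steps: verify that $\pi^*(u)$ is a well-defined linear functional on $\widehat{M^*}$, check compatibility with the $\LL$-action, and compute the kernel. The key bookkeeping throughout is the standard sign convention for the graded dual, namely $(x\xi)(v) = -\xi(xv)$; a one-line commutator check confirms this gives $M^*$ the $\LL$-module structure asserted in the preceding lemma.

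For well-definedness, suppose $\sum_i \psi(x_i,\xi_i) = 0$ in $\widehat{M^*} \subset \Hom(A,M^*)$. By definition, this means $\sum_i (f x_i)\xi_i = 0$ in $M^*$ for every $f \in A$. Evaluating this identity in $M^*$ at an arbitrary $u \in M$ and specializing to $f = 1$ yields $\sum_i \xi_i(x_i u) = 0$, which is precisely what is needed in order that the assignment $\pi^*(u)\bigl(\sum_i \psi(x_i,\xi_i)\bigr) = \sum_i \xi_i(x_i u)$ be independent of the chosen presentation. A short weight count using \eqref{covL} with $y = d_i$ shows $\psi(x,\xi)$ has weight equal to the sum of the weights of $x$ and $\xi$, and one verifies that if $u \in M_\lambda$ then $\pi^*(u)$ has weight $\lambda$ in $(\widehat{M^*})^*$, so that $\pi^*$ respects the weight grading.

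Next, to prove $\pi^*(yu) = y\pi^*(u)$, I would expand $y\psi(x,\xi) = \psi([y,x],\xi) + \psi(x,y\xi)$ via \eqref{covL}, then compute using the sign rule for the graded dual on both $M^*$ and $(\widehat{M^*})^*$:
\begin{align*}
(y\pi^*(u))(\psi(x,\xi)) &= -\pi^*(u)\bigl(\psi([y,x],\xi) + \psi(x,y\xi)\bigr) \\
&= -\xi([y,x]u) - (y\xi)(xu) \\
&= -\xi(yxu) + \xi(xyu) + \xi(yxu) \\
&= \xi(xyu) = \pi^*(yu)(\psi(x,\xi)).
\end{align*}
For the kernel, $\LL u = 0$ visibly gives $\pi^*(u) = 0$. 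Conversely, $\pi^*(u) = 0$ forces $\xi(xu) = 0$ for every $x \in \LL$ and every $\xi \in M^*$; since $M$ is a weight module, the graded dual $M^*$ separates points of $M$, and hence $xu = 0$ for all $x \in \LL$. The proof is essentially formal; the only delicate point is keeping the sign conventions on the two successive graded duals consistent, and once that is done there is no substantive obstacle.
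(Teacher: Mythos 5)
Your proof is correct and follows essentially the same route as the paper: the homomorphism property is verified by exactly the same computation with the sign convention $(y\xi)(v)=-\xi(yv)$ on the graded dual, and the kernel argument is the one the paper calls obvious. Your additional checks (well-definedness of $\pi^*(u)$ on the spanning set $\psi(x,\xi)$ and the weight bookkeeping showing $\pi^*(u)$ lands in the graded dual) are correct and simply make explicit what the paper leaves implicit.
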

\begin{proof}
Let us show that $\pi^*$ is a homomorphism of $\LL$-modules. We need to show that
for $y \in \LL$ we have $y \pi^* (u) = \pi^* (yu)$. Indeed,
\begin{align*}
(y \pi^* (u)) ( \psi(x,\xi)) = - \pi^*(u) (y \psi(x, \xi))
= -\pi^* (u) (\psi([y,x], \xi) - \pi^*(u) (\psi(x, y\xi)) \\
= - \xi ([y,x] u) - (y \xi) (xu) = - \xi ([y,x] u) + \xi( y (x u)) 
= \xi ( x (yu)) = \pi^* (yu) (\psi(x, \xi)) .
\end{align*}
The statement about the kernel of $\pi^*$ is obvious.
\end{proof}

\begin{cor}
Let $M$ be a cuspidal $\LL$-module with no 1-dimensional submodules. Then 
$M$ can be embedded into a cuspidal $AW_n$-module.
\end{cor}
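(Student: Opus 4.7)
The plan is to take the map $\pi^* \colon M \to (\widehat{M^*})^*$ constructed in the preceding proposition and verify that it realizes $M$ as a submodule of a cuspidal $A\LL$-module.

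First I would establish injectivity. By the preceding proposition, $\ker \pi^* = \{u \in M \mid \LL u = 0\}$. This kernel is itself an $\LL$-submodule of $M$ on which $\LL$ acts as zero; being a weight submodule, if it were nonzero it would contain a nonzero weight vector $u$, and then $\C u$ would be a 1-dimensional submodule of $M$ (necessarily trivial, since $\LL$ is perfect and hence admits no nonzero characters). The hypothesis excludes this, so $\pi^*$ is an embedding.

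Next I would verify that the target $(\widehat{M^*})^*$ is a cuspidal $A\LL$-module by chaining the tools already developed. The graded dual $M^*$ is cuspidal because graded dualization preserves weight-space dimensions. Applying Theorem \ref{cusp} (or Theorem \ref{solcusp} in the solenoidal case) to the cuspidal $\LL$-module $M^*$ shows that its $A$-cover $\widehat{M^*}$ is a cuspidal $A\LL$-module. Finally, the preceding lemma on graded duals endows $(\widehat{M^*})^*$ with an $A\LL$-structure, and graded dualization once more preserves weight-space dimensions, so $(\widehat{M^*})^*$ is cuspidal.

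I do not anticipate a genuine obstacle here: the corollary is essentially a bookkeeping step that assembles the proposition, Theorem \ref{cusp}/\ref{solcusp}, and the lemma on graded duals. The only conceptual point to flag is the translation of the hypothesis "no 1-dimensional submodules" into triviality of $\ker \pi^*$, which relies on the observation that perfectness of $\LL$ forces any 1-dimensional submodule to be a trivial one.
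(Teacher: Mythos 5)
Your argument is correct and is exactly the chain the paper intends: the paper states this corollary without proof precisely because it follows immediately from the proposition on $\pi^*$ (whose kernel $\{u \mid \LL u = 0\}$ vanishes when $M$ has no 1-dimensional submodules), Theorem \ref{cusp} (or \ref{solcusp}) applied to the cuspidal module $M^*$, and the lemma that graded duals of $A\LL$-modules are $A\LL$-modules, with cuspidality preserved under dualization. The aside about perfectness of $\LL$ is harmless but unnecessary, since any nonzero $u$ in the kernel already spans a 1-dimensional submodule because $\LL u = 0$.
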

\fi

\section{Classification of simple $W_n$-modules with finite-dimensional weight spaces}

In this section we establish the classification of simple $W_n$-modules with finite-dimensional weight
spaces which was originally conjectured by Eswara Rao \cite{Rao2}. First we are going to present two classes
of modules: cuspidal simple modules and simple modules of the highest weight type. Then we shall
prove that these two classes give a complete list.

{\it Cuspidal simple modules.} These modules have a geometric origin -- they are modules of tensor 
fields on a torus (\cite{Ru}, \cite{Fu}, \cite{Sh}). 

Let $U$ be a finite-dimensional simple $\gl_n$-module and let $\beta\in\C^n$. The module of tensor 
fields $T(U,\beta)$ is the space
$$T(U, \beta) = t^\beta \C [t_1^{\pm 1}, \ldots, t_n^{\pm 1}] \otimes U$$
with a $W_n$-action given as follows:
$$(t^m d_a) (t^s \otimes u) = s_a t^{s+m} \otimes u + \sum_{p=1}^n m_p t^{s+m} \otimes E_{pa} u,$$
where $m \in \Z^n$, $s \in \beta + \Z^n$, $u \in U$ and $E_{pa}$ is an $n \times n$ matrix with $1$ in position
$(p,a)$ and zeros elsewhere. It is easy to check that the modules of tensor fields $T(U,\beta)$ are $AW_n$-modules.

Let $V$ be the natural $n$-dimensional representation of $\gl_n$ and let $\Lambda^k V$ be its $k$-th exterior power,
$k = 0, \ldots, n$. Note that $\Lambda^k V$ is a simple $\gl_n$-module for all $k = 0, \ldots, n$. 
The modules $\Omega^k (\beta) = T(\Lambda^k V, \beta)$ are the modules of differential
$k$-forms. These modules form the de Rham complex
$$\Omega^0(\beta) \darrow \Omega^1 (\beta) \darrow \ldots \darrow \Omega^n (\beta) .$$
The differential $d$ of the de Rham complex is a homomorphism of $W_n$-modules (however it is not a homomorphism
of $A$-modules). Thus the kernels and the images of $d$ are $W_n$-submodules in $\Omega^k (\beta)$. As a result,
for $1 \leq k \leq n-1$, $\Omega^k (\beta)$ are reducible $W_n$-modules, while $\Omega^0 (\beta)$ and $\Omega^n (\beta)$
are reducible if and only if $\beta \in \Z^n$.

The following Theorem was proved by Eswara Rao (there is also a similar result of Rudakov \cite{Ru} for the Lie algebra
of vector fields on an affine space):
\begin{thm} (\cite{Rao1}, see also \cite{GZ}) Let $U$ be a finite-dimensional simple $\gl_n$-module and let $\beta\in\C^n$.

(1) The module of tensor fields $T(U, \beta)$ is a simple $W_n$-module unless it is a module $\Omega^k (\beta)$ of
differential $k$-forms, $0 \leq k \leq n$.

(2) The module $\Omega^k (\beta)$, $0 \leq k \leq n$, has a unique simple quotient, which is $d \Omega^k(\beta)$ for $0 \leq k \leq n-1$.
The module $\Omega^n (\beta)$ is simple when $\beta \not\in\Z^n$ and has a trivial $1$-dimensional module as a simple quotient
when $\beta \in \Z^n$. 
\label{deRham}
\end{thm}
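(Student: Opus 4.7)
The plan is to analyze $W_n$-submodules $N \subseteq T(U,\beta)$ through their weight components $N^{(s)} := \{u \in U : t^{s-\beta} \otimes u \in N\}$ and to exploit the action formula
$$(t^m d_a)(t^{s-\beta} \otimes u) = t^{s+m-\beta} \otimes \Bigl(s_a I + \sum_p m_p E_{pa}\Bigr) u,$$
so that $(s_a I + \sum_p m_p E_{pa}) u \in N^{(s+m)}$ whenever $u \in N^{(s)}$. First I would show that if $N \ne 0$ then $N^{(s)} \ne 0$ for every weight $s$: for generic $m$ the operator $s_a I + \sum_p m_p E_{pa}$ is a rank-one perturbation of $s_a I$ and is invertible on $U$, so a non-zero element propagates to every weight outside a bounded set, and then back to the remaining weights by reversing the argument.

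Second, by varying $m$ and iterating the $W_n$-action, the linear span of composed operators $\prod_i \bigl(s^{(i)}_{a_i} I + \sum_p m^{(i)}_p E_{p a_i}\bigr)$ contains arbitrary polynomial words in the matrix units $E_{pa}$, so the fibers $N^{(s)}$ are $\gl_n$-stable for $s$ in a cofinite subset. Simplicity of $U$ as a $\gl_n$-module then forces $N^{(s)} = U$ generically, and pulling back through the action yields $N = T(U,\beta)$ \emph{unless} the family $\{s_a I + \sum_p m_p E_{pa}\}_{s,m,a}$ shares a non-trivial common kernel on $U$ for some parameters. A direct invariant-theoretic computation identifies these kernels with interior contractions $\iota_m$, showing that the obstruction occurs precisely when $U = \Lambda^k V$ is an exterior power of the natural module, and the corresponding proper submodules of $T(U,\beta)$ are exactly the images/kernels of the de Rham differential.

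For the differential form modules I would exhibit $d : \Omega^k(\beta) \to \Omega^{k+1}(\beta)$ as an explicit $W_n$-intertwiner, so that both $\ker d$ and $\mathrm{im}\, d$ are $W_n$-submodules. Decomposing by the $\Z^n$-grading, each degree-$s$ piece of the complex $\Omega^\bullet(\beta)$ is the Koszul complex on $\Lambda^\bullet \C^n$ with respect to the vector $s+\beta$, which is acyclic iff $s + \beta \ne 0$. Thus for $\beta \notin \Z^n$ the complex is exact, giving $\ker d = d\Omega^{k-1}(\beta)$ and the unique simple quotient $\Omega^k(\beta)/d\Omega^{k-1}(\beta) \cong d\Omega^k(\beta)$ for $k < n$, while $\Omega^n(\beta)$ itself is simple. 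For $\beta \in \Z^n$ the cohomology lives only at degree $s = -\beta$, contributing the trivial one-dimensional submodule of constants in $\Omega^0(\beta)$ and the trivial one-dimensional residue quotient of $\Omega^n(\beta)$. The main obstacle will be the rigidity step of paragraph two: showing that the family $\{s_a I + \sum_p m_p E_{pa}\}$ has no non-trivial common kernel on any simple $\gl_n$-module other than the exterior powers. This is the invariant-theoretic heart of Rudakov-type classifications and is precisely the reason why the exceptional modules in the theorem are the de Rham forms.
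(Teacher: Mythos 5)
You should first note that the paper does not prove this theorem at all: it is quoted from Eswara Rao \cite{Rao1} (see also \cite{GZ}), so your sketch can only be measured against the standard Rudakov-style arguments in those references, and as written it has genuine gaps. Your first step --- ``if $N \neq 0$ then $N^{(s)} \neq 0$ for every weight $s$'' --- is false precisely in the cases the theorem singles out: the constants $\C \cdot 1 \subset \Omega^0(0)$ form a submodule whose fibers vanish at every nonzero weight, and $d\Omega^{n-1}(0) \subset \Omega^n(0)$ has zero fiber at the zero weight. The generic-invertibility argument only propagates nonvanishing away from the special weight (where all $s_a$ vanish), and the delicate part of the classification lives exactly there.

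The more serious gap is the rigidity step of your second paragraph, which you yourself flag as ``the main obstacle'': the assertion that iterated operators $\prod_i (s^{(i)}_{a_i} I + \sum_p m^{(i)}_p E_{p a_i})$ span enough of $U(\gl_n)$ to make the fibers $\gl_n$-stable is not established (different $m$ land in different fibers, so one cannot simply separate terms), and the dichotomy you draw --- $N^{(s)} = U$ generically \emph{unless} the operator family has a common kernel --- is not the right one. In the exceptional cases $U = \Lambda^k V$ the proper submodules have fibers equal to the image of wedging with the weight vector (equivalently, kernels of contraction with it), so they vary with $s$ and are not a common kernel of the family; moreover, if the fibers genuinely were stable under all of $\gl_n$, simplicity of $U$ would leave no exceptions at all, so the content of the theorem is exactly the identification of which weight-dependent families of subspaces can occur --- the invariant-theoretic heart of \cite{Rao1} and \cite{GZ} that is missing here. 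Similarly, in part (2) the Koszul-complex computation correctly identifies $\ker d$ in each graded piece and the extra cohomology when $\beta \in \Z^n$, but exactness alone does not show that $d\Omega^k(\beta)$ is simple, nor that $\ker d$ (together with the constants when $k=0$, $\beta\in\Z^n$) is the \emph{unique} maximal submodule of $\Omega^k(\beta)$; both claims again require the submodule analysis from the unproved rigidity step. So the proposal is the right strategy in outline, but its key lemma is assumed rather than proved.
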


Eswara Rao also classified simple cuspidal $AW_n$-modules \cite{Rao2}. Cuspidal $AW_n$-modules without the assumption on simplicity 
are described in \cite{Jet}.

\begin{thm} (\cite{Rao2})
Simple cuspidal $AW_n$-modules are precisely the modules of tensor fields $T(U, \beta)$ with simple finite-dimensional $gl_n$-modules $U$
and $\beta\in\C^n$.
\label{AWsimple}
\end{thm}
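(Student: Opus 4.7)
The plan is to deduce this classification from the structural result of Theorem \ref{jets}, which already expresses every cuspidal $AW_n$-module in the explicit form $t^\beta\C[t_1^{\pm 1},\ldots,t_n^{\pm 1}]\otimes V$ for some finite-dimensional representation $(V,\rho)$ of the jet Lie algebra $\J_+$. Thus the theorem reduces to two essentially self-contained tasks: (i) characterize simplicity of $M$ as an $AW_n$-module in terms of the $\J_+$-module $V$, and (ii) show that a simple finite-dimensional $\J_+$-module is forced to be a simple $\gl_n$-module on which the positive-degree part $\J_{\geq 1}=\bigoplus_{k\geq 1}\J_k$ acts trivially.

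For task (i), I would argue that an $AW_n$-submodule $N\subset M$ must be homogeneous and $A$-stable, and since all nonzero homogeneous components of $A$ act invertibly, the intersection $N\cap(t^s\otimes V)=t^s\otimes V'$ is independent of $s$ for a fixed subspace $V'\subset V$. Applying the explicit action \eqref{AWcusp} of $t^m d_j$ to an element $t^s\otimes v$ with $v\in V'$ and varying $m\in\Z^n$ and $s\in\beta+\Z^n$ separately, each Taylor coefficient $\rho(t^k \partial/\partial t_j)v$ must lie in $V'$; that is precisely the statement that $V'$ is stable under all of $\J_+$. Conversely every $\J_+$-submodule $V'$ produces an $AW_n$-submodule. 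This gives a bijection between $AW_n$-submodules of $M$ and $\J_+$-submodules of $V$, so simplicity of $M$ is equivalent to simplicity of $V$ as a $\J_+$-module.

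For task (ii), the key observation is that $\J_{\geq 1}$ is an ideal in $\J_+$, so the invariant subspace $V^{\J_{\geq 1}}=\{v\in V:\rho(\J_{\geq 1})v=0\}$ is a $\J_+$-submodule of $V$. I would show $V^{\J_{\geq 1}}\neq 0$ using the Euler element $E=\sum t_i\partial/\partial t_i\in\J_0$: since $[E,x]=kx$ for $x\in\J_k$, the operator $\rho(x)$ shifts generalized eigenvalues of $\rho(E)$ up by $k$, so the generalized eigenspace attached to an eigenvalue of maximal real part is annihilated by every $\rho(\J_k)$ with $k\geq 1$ (using also $\rho(\J_k)=0$ for $k\gg 0$ to make the argument finite). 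By simplicity, $V^{\J_{\geq 1}}=V$, so the $\J_+$-action factors through $\J_+/\J_{\geq 1}=\J_0\cong\gl_n$, making $V$ a simple $\gl_n$-module. Substituting this into \eqref{AWcusp} kills all summands with $|k|\geq 2$, and the remaining $|k|=1$ terms are precisely the ones defining $T(V,\beta)$.

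The converse, that $T(U,\beta)$ is a simple $AW_n$-module whenever $U$ is a simple $\gl_n$-module, follows from the same submodule analysis applied in reverse: any $AW_n$-submodule has the form $t^\beta\C[t_1^{\pm 1},\ldots,t_n^{\pm 1}]\otimes U'$ with $U'\subset U$ a $\gl_n$-submodule, so it is $0$ or all of $T(U,\beta)$. The main subtlety that deserves care is the argument producing a nonzero $\J_{\geq 1}$-invariant vector; the naive hope to invoke Engel's theorem fails because elements of $\J_{\geq 1}$ need not act nilpotently on $V$, which is exactly why the Euler-grading trick is indispensable.
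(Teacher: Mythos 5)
Your argument is correct, and you should note that the paper itself gives no proof of Theorem \ref{AWsimple}: the statement is quoted from Eswara Rao \cite{Rao2}, and the paper only remarks that the classification ``can easily be obtained'' from the jet-module description of Theorem \ref{jets}. Your write-up supplies exactly that derivation --- the correspondence between $AW_n$-submodules of $t^\beta A\otimes V$ and $\J_+$-submodules of $V$ (using $A$-invertibility and extraction of the polynomial coefficients in $m$ from \eqref{AWcusp}), followed by the Euler-grading argument showing that $\bigoplus_{k\geq 1}\J_k$ annihilates any simple finite-dimensional $\J_+$-module so that $V$ becomes a simple $\gl_n$-module and \eqref{AWcusp} reduces to the tensor-field action --- and both steps check out.
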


{\it Simple $W_n$-modules of the highest weight type.} Consider a $\Z$-grading on $W_n$ by eigenvalues of $d_n = t_n \frac{\partial}{\partial t_n}$.
Write the decomposition of $W_n$:
$$W_n = W_n^- \oplus W_n^0 \oplus W_n^+$$
into the subalgebras of negative, zero and positive degree. The subalgebra $W_n^0$ is a semidirect product of $W_{n-1}$ with an abelian ideal:
$$W_n^0 = \Der \C [t_1^{\pm 1}, \ldots, t_{n-1}^{\pm 1} ] \ltimes \C [t_1^{\pm 1}, \ldots, t_{n-1}^{\pm 1} ] d_n .$$
Let $\tX$ be a weight module for $W_n^0$ with a unique simple quotient $X$ (we could have started with $X$, but keeping in mind the construction
of simple modules $d \Omega^k (\beta)$, we prefer to work in this slightly more general setting).
We define simple modules of the highest weight type using the technique of the generalized Verma modules.

Postulate $W_n^+ \tX = 0$ and construct the generalized Verma module by inducing:
$$M(\tX) = \Ind_{W_n^0\oplus W_n^+}^{W_n} \tX \cong U(W_n^-) \otimes \tX .$$
The generalized Verma module $M(\tX)$ has a unique simple quotient, which we denote $L(\tX)$. Note that $L(\tX) \cong L(X)$.

The group $\GL$ acts on the algebra $A$ of Laurent polynomials, which is the group algebra of $\Z^n$. This induces
the action of $\GL$ by automorphisms on $W_n = \Der A$. For each $g \in \GL$ we can consider a twisted module $L(\tX)^g$, where the action
of $W_n$ is composed with an automorphism $g$.

 We call the modules $L(\tX)^g$ simple $W_n$-modules of the {\it highest weight type}. Whereas the generalized Verma module $M(\tX)$ always
contains infinite-dimensional weight spaces, the situation may improve with passing to its simple quotient $L(\tX)$.
\begin{thm} (\cite{BB}, see also \cite{BZ})
Let $\tX$ be a $W_n^0$-module with a polynomial action (see \cite{BB} for the definition). Then the simple $W_n$-module of the highest weight type
$L(\tX)$ has finite-dimensional weight spaces.
\end{thm}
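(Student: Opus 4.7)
The plan is to exploit the polynomial action of $W_n^0$ on $\tX$ to produce enough relations in the maximal proper submodule $J \subset M(\tX)$ that the simple quotient $L(\tX) = M(\tX)/J$ has finite-dimensional weight spaces, compensating for the fact that the PBW basis of $M(\tX) \cong U(W_n^-) \otimes \tX$ yields infinite-dimensional ones on the nose.

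First I would set up the combinatorics. Decompose $W_n^- = \bigoplus_{k \geq 1} W_n^{(-k)}$ by degree in $t_n$; each graded piece $W_n^{(-k)} = \bigoplus_{r' \in \Z^{n-1}} \bigoplus_{p=1}^n \C \cdot t^{r'} t_n^{-k} d_p$ is infinite-dimensional because of the horizontal modes $r' \in \Z^{n-1}$. The polynomial action on $\tX$ means, roughly, that for any $v \in \tX_\lambda$ and any fixed horizontal weight shift, $(t^{r'} d_p)v$ depends polynomially on $r'$ with values in a fixed finite-dimensional space; equivalently, sufficiently high-order horizontal difference operators (direct analogs of the differentiators $\Omega^{(m)}_{k,s}$ from Section 3, applied along each solenoidal subalgebra sitting in $W_n^0$) annihilate $\tX$. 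The same polynomial dependence holds for the abelian ideal $A_{n-1}d_n \subset W_n^0$.

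Next, by induction on the depth $k$, I would establish the following claim in $L(\tX)$: for any weight vector $w$, the map $r' \mapsto (t^{r'} t_n^{-k} d_p)w$ is a polynomial function of $r' \in \Z^{n-1}$ of uniformly bounded degree, taking values in a fixed finite-dimensional subspace. The case $k=1$ reduces to the polynomial action on $\tX$ after observing that brackets of $t^{r'} t_n^{-1} d_p$ with elements of $W_n^+$ land in $W_n^0 \oplus W_n^+$, so the non-polynomial tails are absorbed by $J$. For the inductive step, every depth-$k$ creation operator arises as a Lie bracket of a depth-$k_1$ and a depth-$k_2$ operator with $k_1 + k_2 = k$, so polynomial dependence propagates with a controlled, additive increase in the degree bound. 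Since any fixed weight $\mu$ of $L(\tX)$ is reached only by finitely many depth-partitions of the total $t_n$-degree, and each contributes finitely many independent monomials modulo $J$, this gives $\dim L(\tX)_\mu < \infty$.

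The main obstacle is the inductive step, which requires tight control on $J$: one must verify that all ``excess'' horizontal modes of the creation operators really are killed in the simple quotient, not just in $\tX$. Concretely, one needs the ideal in $M(\tX)$ generated by the horizontal polynomial relations on $\tX$ to remain a proper submodule (hence sit inside $J$), and one needs the degree bound for the creation operators to grow additively, not multiplicatively, with depth. Once both points are secured, the counting in the previous paragraph finishes the proof.
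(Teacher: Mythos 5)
This theorem is not proved in the paper at all: it is imported from \cite{BB} (see also \cite{BZ}), so there is no internal proof to compare against. Your outline does follow the general shape of the argument in those references: use polynomiality of the structure constants and of the $W_n^0$-action, and induct on the $t_n$-depth to show that in $L(\tX)$ the spanning vectors of a fixed weight space depend polynomially on the horizontal indices $r'\in\Z^{n-1}$, hence lie in a finite-dimensional span.

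However, there is a genuine gap at exactly the step you yourself flag as ``the main obstacle,'' and as written you do not close it. In $M(\tX)\cong U(W_n^-)\otimes\tX$ the depth-one piece is the full $W_n^{(-1)}\otimes\tX$: no ``horizontal polynomial relations'' hold there, so the phrase ``the ideal in $M(\tX)$ generated by the horizontal polynomial relations on $\tX$'' has no content, and your justification for $k=1$ (``brackets of $t^{r'}t_n^{-1}d_p$ with $W_n^+$ land in $W_n^0\oplus W_n^+$, so the non-polynomial tails are absorbed by $J$'') is not a deduction --- membership in $J$ is precisely what must be proved. The mechanism that actually works, and that you need to supply, is a singular-vector argument: apply a sufficiently high-order horizontal difference operator (the analogue of the differentiators $\Omega^{(m)}_{k,s}$ of Section 3) to the family of vectors $(t^{r'}t_n^{-k}d_p)x_2\cdots x_j v$ spanning a given weight space; using polynomiality of the brackets and the inductive hypothesis at smaller depth, show that the resulting vector of negative $t_n$-degree is annihilated by all of $W_n^+$; by degree reasons the submodule it generates meets the top space $\tX$ trivially, hence is proper and is contained in the maximal submodule, so the vector vanishes in $L(\tX)$ by simplicity. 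Only this yields the polynomial dependence on $r'$ in the quotient, both at depth one and at each inductive step (where the bookkeeping must be carried out on whole monomials $x_1\cdots x_j v$, not on a single bracket factor, so the degree bounds are controlled per weight, not per operator). Without this explicit use of simplicity the induction never starts, since the weight spaces of $M(\tX)$ itself genuinely are infinite-dimensional.
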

In particular, we can take a simple finite-dimensional $\gl_{n-1}$-module $U$, and let $\tX$ be a $W_{n-1}$-module of tensor fields
$T(U, \beta)$ for some $\beta\in\C^{n-1}$. We fix $\gamma\in\C$ and define the action of the abelian ideal as follows:
$$(t^m d_n)(t^s \otimes u) = \gamma t^{s+m} \otimes u, \quad \text{ for } m\in\Z^{n-1}, \; s\in\beta+\Z^{n-1}, \; u \in U.$$
The resulting module $\tX = T(U, \beta, \gamma)$ is a $W_n^0$-module with a polynomial action \cite{BZ} and has a unique simple quotient.
Thus the corresponding simple $W_n$-module of the highest weight type $L(\tX) = L(U, \beta, \gamma)$ has finite-dimensional weight spaces.

The structure of the simple modules $L(U, \beta, \gamma)$ was studied in \cite{BF}, where their vertex operator realizations are given.

Now we can state the main classification result presented in Introduction.

\begin{thm}
(1) Every simple $W_n$-module with finite-dimensional weight spaces is either cuspidal or of the highest weight type.

(2) A simple cuspidal $W_n$-module is isomorphic to one of the following:

$\bullet$ a module of tensor fields $T(U, \beta)$, where $\beta\in\C^n$ and $U$ is a finite-dimensional simple $\gl_n$-module different
from the $k$-th exterior power of the natural $n$-dimensional $\gl_n$-module, $0 \leq k \leq n$;

$\bullet$ a submodule $d \Omega^k (\beta) \subset \Omega^{k+1} (\beta)$, $0 \leq k < n$;

$\bullet$ a trivial $1$-dimensional $W_n$-module.

(3) A module of the highest weight type is isomorphic to $L(U, \beta, \gamma)^g$ for some finite-dimensional simple
$\gl_{n-1}$-module $U$, $\beta\in\C^{n-1}$, $\gamma\in\C$ and $g \in \GL$.
\label{main}
\end{thm}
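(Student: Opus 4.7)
My plan is to first split off the highest weight type using the support classification of Mazorchuk-Zhao, and then reduce the remaining cuspidal case to the $AW_n$-classification of Eswara Rao via the $A$-cover functor of Theorem~\ref{proj}.

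For the dichotomy in part~(1), let $M$ be a simple $W_n$-module with finite-dimensional weight spaces. By \cite{MZ}, either the support of $M$ has uniformly bounded multiplicity, so that $M$ is cuspidal, or, after twisting by an appropriate $g \in \GL$, the support is contained in a proper half-space with respect to the $d_n$-direction and carries a unique top $d_n$-layer $\tX = \bigoplus_{\mu_n = c} M_\mu$. In the second case, the top property forces $W_n^+ \tX = 0$, so $\tX$ is a $W_n^0$-submodule and the canonical surjection $M(\tX) \twoheadrightarrow M$ exhibits $M$ as a simple module of the highest weight type.

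For part~(2) I would treat the trivial module separately and assume $M$ is a nontrivial simple cuspidal $W_n$-module, so that $W_n M = M$ by simplicity. Theorem~\ref{proj} then yields a cuspidal $AW_n$-module $\hM$ with a $W_n$-module surjection $\pi : \hM \twoheadrightarrow M$. Since cuspidal $AW_n$-modules have finite length by \cite{Jet}, I would pick an $AW_n$-submodule $N \subseteq \hM$ of minimal $AW_n$-length satisfying $\pi(N) \neq 0$ and let $N' \subsetneq N$ be its unique maximal proper $AW_n$-submodule; minimality forces $\pi(N') = 0$, so $\pi$ factors through a surjective $W_n$-homomorphism $N/N' \twoheadrightarrow M$. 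By Theorem~\ref{AWsimple} the simple $AW_n$-module $N/N'$ is isomorphic to $T(U,\beta)$, and Theorem~\ref{deRham} then enumerates its possible simple $W_n$-quotients, yielding exactly the three bullets in~(2).

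Part~(3) requires identifying the simple $W_n^0$-quotient $X$ of the top $\tX$ from the dichotomy step, and this is where I expect the main obstacle. Writing $W_n^0 = W_{n-1} \ltimes I$ with $I = \C[t_1^{\pm 1}, \ldots, t_{n-1}^{\pm 1}] d_n$ the abelian ideal, the restriction of $X$ to $W_{n-1}$ is a simple cuspidal $W_{n-1}$-module, hence of the form $T(U,\beta)$ by induction on $n$; the delicate point is to show that $I$ acts on $X$ by the natural $\C[t_1^{\pm 1},\ldots,t_{n-1}^{\pm 1}]$-action rescaled by a single scalar $\gamma \in \C$. I would exploit the compatibility between the $W_{n-1}$- and $I$-actions, together with the rigidity of the $AW_{n-1}$-classification, to force $X \cong T(U,\beta,\gamma)$ and conclude that $M \cong L(U,\beta,\gamma)^g$ after undoing the initial $\GL$-twist.
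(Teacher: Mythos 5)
Your parts (1) and (2) follow essentially the same route as the paper: part (1) is exactly the content of Mazorchuk--Zhao \cite{MZ} (whose Theorem~1 in fact gives the stronger statement that a non-cuspidal simple module is $L(X)^g$ with $X$ a \emph{simple cuspidal} $W_n^0$-module -- the cuspidality of the top layer, which you do not mention, is needed later), and part (2) is the paper's own argument: Theorem~\ref{proj} gives a surjection from a cuspidal $AW_n$-module $\hM$, a composition series argument (your ``minimal length $N$ with $\pi(N)\neq 0$'' is the same device) produces a simple $AW_n$-subquotient mapping onto $M$, and Theorems~\ref{AWsimple} and \ref{deRham} finish.

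Part (3), however, contains a genuine gap, and it is precisely the step you flag as ``the main obstacle.'' You need to know the structure of a simple cuspidal $W_n^0$-module $X$, where $W_n^0 = W_{n-1}\ltimes I$ with $I=\C[t_1^{\pm1},\ldots,t_{n-1}^{\pm1}]d_n$: namely that either $X\cong T(U,\beta,\gamma)$ with $\gamma\neq 0$, or $I$ acts trivially (in which case $X$ is a simple cuspidal $W_{n-1}$-module and Lemma~\ref{cuspquot} applies, giving $L(U,\beta,0)^g$). This dichotomy is not a routine ``rigidity'' consequence of the $AW_{n-1}$-classification; it is a separate theorem, and the paper does not prove it either but cites Guo--Liu--Zhao (\cite{GLZ}, Theorem~3.3). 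Your proposal replaces this citation with an intention (``I would exploit the compatibility \ldots to force $X\cong T(U,\beta,\gamma)$''), which leaves the identification of the $I$-action unproved. Moreover, the starting point of your intended argument is already unjustified: the restriction of a simple $W_n^0$-module $X$ to $W_{n-1}$ need not be simple (a $W_{n-1}$-submodule need not be $I$-stable; compare $d\Omega^k(\beta)\subset\Omega^{k+1}(\beta)$, which is a $W_{n-1}$-submodule but not an $A$-submodule), and even when $I$ does act trivially, part (2) only tells you that $X$ is a simple \emph{quotient} of some $T(U,\beta)$, not that $X\cong T(U,\beta)$; the paper handles this by passing to $\tX=T(U,\beta)\cong T(U,\beta,0)$ and using $L(\tX)\cong L(X)$. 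So either supply a proof of the $\gamma$-dichotomy for simple cuspidal $W_n^0$-modules or invoke \cite{GLZ} explicitly; as written, part (3) is incomplete.
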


\begin{proof}
Part (1) of the Theorem was proved by Mazorchuk-Zhao (\cite{MZ}, Theorem 1). More precisely, they prove
that a simple $W_n$-module with finite-dimensional weight spaces is either cuspidal or is isomorphic to a module
of the highest weight type $L(X)^g$ for some $g\in\GL$ and some simple cuspidal $W_n^0$-module $X$. Let us analyze these
two cases.

{\it Case of cuspidal modules.} The classification in this case follows from the following lemma:
\begin{lem}
Let $M$ be a simple cuspidal $W_n$-module. Then $M$ is a simple quotient of a module of tensor fields $T(U, \beta)$
for some simple $\gl_n$-module $U$ and $\beta\in\C^n$.
\label{cuspquot}
\end{lem}
\noindent
{\it Proof of Lemma.}
The claim holds for the trivial $1$-dimensional module since it is the simple quotient of the module $\Omega^n (0)$.
Let us now assume that $M$ is a non-trivial simple module. Then $W_n M = M$ and
there is a surjective homomorphism $\pi$ from a cuspidal $AW_n$-module $\hM$ to $M$
by Proposition \ref{Astr}(3) and Theorem \ref{cusp}.

Consider a composition series of $AW_n$-submodules in $\hM$:
$$0 = \hM_0 \subset \hM_1 \subset \ldots \subset \hM_{\ell-1} \subset \hM_\ell = \hM$$
with the quotients $\hM_i / \hM_{i-1}$ being simple $AW_n$-modules. Let $s$ be the smallest integer
such that $\pi(\hM_s) \neq 0$. Since $M$ is a simple $W_n$-module we have
$\pi(\hM_s) = M$ and $\pi(\hM_{s-1}) = 0$. This gives us a surjective homomorphism of $W_n$-modules
$$\bar{\pi}: \; \hM_s /\hM_{s-1} \rightarrow M .$$
By Theorem \ref{AWsimple}, $\hM_s /\hM_{s-1}$ is isomorphic to a module of tensor fields $T(U, \beta)$
for some simple $\gl_n$-module $U$ and $\beta\in\C^n$. This completes the proof of the lemma.
\hfill \qed

The classification of simple cuspidal $W_n$-modules now follows from Lemma \ref{cuspquot} and Theorem \ref{deRham}.

{\it Case of simple modules of the highest weight type.} Let $M$ be a simple module $L(X)^g$ for some
$g\in\GL$ and some simple cuspidal $W_n^0$-module $X$. Guo-Liu-Zhao \cite{GLZ} have reduced the
description of these modules to the classification of simple cuspidal $W_{n-1}$-modules. More precisely,
they proved (\cite{GLZ}, Theorem 3.3) that either $X \cong T(U, \beta, \gamma)$ for some
finite-dimensional simple $\gl_{n-1}$-module $U$, $\beta \in \C^{n-1}$ and some non-zero $\gamma \in \C$,
or the abelian ideal $\C[t_1^{\pm 1}, \ldots, t_{n-1}^{\pm 1}] d_n$ acts trivially on $X$. The latter case is
the only one that we need to consider. In this setting $X$ is just a simple cuspidal $W_{n-1}$-module.
By Lemma \ref{cuspquot}, $X$ is a simple quotient of a module $\tX = T(U, \beta)$  for some
finite-dimensional simple $\gl_{n-1}$-module $U$ and $\beta \in \C^{n-1}$. Then as a $W_n^0$-module,
$\tX$ is isomorphic to $T(U, \beta, 0)$ and $M \cong L(U, \beta, 0)^g$. This completes the proof of the theorem.
\end{proof}

\

\section{Acknowledgements}
The first author is supported in part by a grant from the Natural Sciences
and Engineering Research Council of Canada.  
The second author is supported in part by the CNPq grant
(301743/2007-0) and by the Fapesp grant (2010/50347-9).
Part of this work was carried out during the visit of the first author
to the University of S\~ao Paulo in 2013. This author would like
to thank the University of S\~ao Paulo for hospitality and
excellent working conditions and Fapesp (2012/14961-0) for
financial support.

\end{document}